\begin{document}
\proctitle        {Moduli of Differentials and Teichm\"uller Dynamics}
\proclecturer     {Andrei Bud, Dawei Chen}
\procinstitution  {Humboldt Universit\"at zu Berlin, Boston College}
\procmail         {andreibud95@gmail.com, dawei.chen@bc.edu}
\procabstract     {%
	An increasingly important area of interest for mathematicians is the study of Abelian differentials. This growing interest can be attributed to the interdisciplinary role this subject plays in modern mathematics, as various problems of algebraic geometry, dynamical systems, geometry and topology lead to the study of such objects. It comes as a natural consequence that we can employ in our study algebraic, analytic, combinatorial and dynamical perspectives. These lecture notes aim to provide an expository introduction to this subject that will emphasize the aforementioned links between different areas of mathematics. We will associate to an Abelian differential a flat surface with conical singularities such that the underlying Riemann surface is obtained from a polygon by identifying edges with one another via translation. We will focus on studying these objects in families and describe some properties of the orbit as we vary the polygon by the action of $GL_2^{+}(\mathbb{R})$ on the plane.   
}

\makeheader



\section{Introduction}
These represent the lecture notes for the three lectures taught by Prof. Dr. Dawei Chen at FU Berlin on the occasion of the Math+ Fall School in Algebraic Geometry held from September 30 to October 4, 2019. \\

The main topic discussed in these lectures is about translation surfaces. They can be viewed from two seemingly unrelated perspectives: from a geometric or from a complex analytic point of view. We will concentrate our attention on the first one. The interested reader can find more detailed introductions on this subject in \cite{abche17} or \cite{abwri15}, from the perspective of Algebraic Geometry and respectively Dynamical Systems. For a broader survey of the literature, we refer to \cite{abzor06}. We hope that these lecture notes will captivate the imagination of young mathematicians and lay the case for the importance of this topic. \\

 We will start by providing the definition of a translation surface and some  equivalent perspectives on the subject. We will continue by studying these surfaces in families stratified by the type of zeroes. In particular, we describe the dimensions and connected components of the corresponding strata, and explain why some strata are disconnected. We then use the polygonal description of a translation surface to motivate the existence of an action of $GL_2^{+}(\mathbb{R})$ on the strata, and provide some fundamental theorems to describe the analytic closures of such orbits. We will end these notes with the exercise sheet, together with some sketch of solutions.\\

\textbf{Acknowledgements.} We would like to thank the organizers Daniele Agostini, Thomas Kr\"amer, Marta Panizzut and Rainer Sinn for this wonderfull and stimulating fall school. The lecturer is also grateful for the support offered by the National Science Foundation CAREER Awards through grant DMS-1350396. 

\section{Translation Surfaces} 
In this section, we will provide different characterizations of translation surfaces and consider some examples to help us familiarize with the new concepts. We start by stating the following definition:
\begin{defn} \label{ab:def1}
	Let X be a real surface (assumed to be connected, compact and orientable) and let $p_1,...,p_n \in X $ be special points satisfying the following conditions: \begin{bulist}
	\item Away from $p_1,...,p_n$ we have that $X \setminus  \left\{ p_1,...,p_n \right\} $ is "locally flat", i.e. covered by finitely many charts with transition functions given by translations. 
	\item At each $p_i$, under the induced flat metric, the angle at the point $p_i$ is equal to $2\pi(m_i+1)$ for some $m_i \in \mathbb{Z}^{+}$. 
\end{bulist}
	We call a surface $X$, together with points $p_1,...,p_n$ as above, a translation surface. 
	
\end{defn} 

Our next definition, which we will see is equivalent to the preceding one, is the following: 

\begin{defn} \label{ab:def2}
	A translation surface is a nonidentically zero Abelian differential, i.e. a non-zero holomorphic one-form $\omega$ on a Riemann surface $X$. 
\end{defn}
To build a bridge between the two definitions we make the remarks: \begin{bulist} 
\item The special points $p_1,...,p_n$ are the zeroes of $\omega$,  
\item $m_i$ is the zero order of $\omega$ at $p_i$. \par
\end{bulist}
Locally at $p_i$ we have a neighbourhood that looks like: 
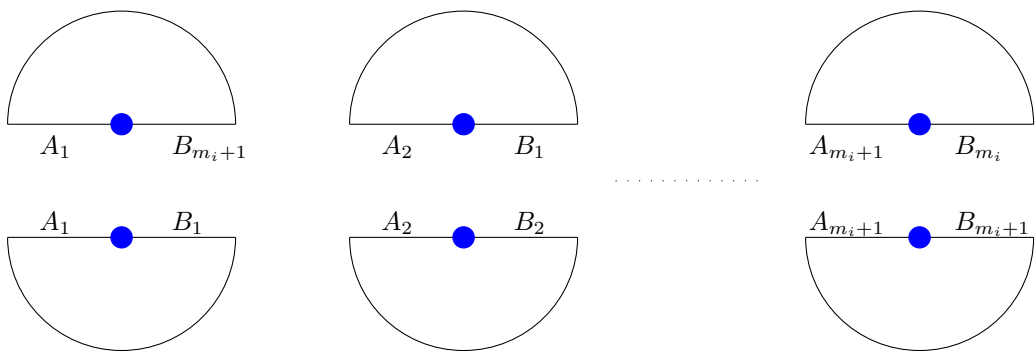
\begin{figure}[h] \centering
\begin{tikzpicture}[baseline=(current bounding box.north)] 

\begin{scope}

\draw (1.5,0) arc(0:180:1.5cm );
\draw (-1.5,0) -- (1.5,0);
\node[mark size=4pt,color=blue] at (0,0) {\pgfuseplotmark{*}};
\end{scope}
%
\node[below left= 0.5mm of {(-0.5,0)}] {$A_1$};
\node[below right= 0.5mm of {(0.5,0)}] {$B_{m_i+1}$};

\begin{scope}

\draw (-1.5,-1.5) arc(180:360:1.5cm );
\draw (-1.5,-1.5) -- (1.5,-1.5);
\node[mark size=4pt,color=blue] at (0,-1.5) {\pgfuseplotmark{*}};
\end{scope}
%
\node[below left= 0.5mm of {(-0.5,-1)}] {$A_1$};
\node[below right= 0.5mm of {(0.5,-1)}] {$B_1$};

\begin{scope}

\draw (6,0) arc(0:180:1.5cm );
\draw (3,0) -- (6,0);
\node[mark size=4pt,color=blue] at (4.5,0) {\pgfuseplotmark{*}};
\end{scope}
%
\node[below left= 0.5mm of {(4,0)}] {$A_2$};
\node[below right= 0.5mm of {(5,0)}] {$B_1$};

\begin{scope}

\draw (3,-1.5) arc(180:360:1.5cm );
\draw (3,-1.5) -- (6,-1.5);
\node[mark size=4pt,color=blue] at (4.5,-1.5) {\pgfuseplotmark{*}};
\end{scope}
\node[below left= 0.5mm of {(4,-1)}] {$A_2$};
\node[below right= 0.5mm of {(5,-1)}] {$B_2$};

\draw[loosely dotted] (6.5,-0.75) -- (8.5,-0.75);

\begin{scope}

\draw (12,0) arc(0:180:1.5cm );
\draw (9,0) -- (12,0);
\node[mark size=4pt,color=blue] at (10.5,0) {\pgfuseplotmark{*}};
\end{scope}
%
\node[below left= 0.5mm of {(10.2,0)}] {$A_{m_i+1}$};
\node[below right= 0.5mm of {(10.8,0)}] {$B_{m_i}$};

\begin{scope}

\draw (9,-1.5) arc(180:360:1.5cm );
\draw (9,-1.5) -- (12,-1.5);
\node[mark size=4pt,color=blue] at (10.5,-1.5) {\pgfuseplotmark{*}};
\end{scope}
\node[below left= 0.5mm of {(10.2,-1)}] {$A_{m_i+1}$};
\node[below right= 0.5mm of {(10.8,-1)}] {$B_{m_i+1}$};

\end{tikzpicture} 
\caption{Neighbourhood of a special point $p_i$}
\end{figure} \\

Here we glue together the radii with the same labelings. Observe that through this process all centers of the semidisks get identified. \\ 

Before proceeding to prove the equivalence between the two definitions, we will provide an example that will help us familiarize with the concepts.  
\begin{ex} \label{ab:h(2)} (translation surface as in Definition \ref{ab:def1})  
	We consider four vectors $v_1,v_2,v_3$ and $v_4$ in the plane and we take the polygon $X$ defined by them as suggested in the picture below.  \par
	
	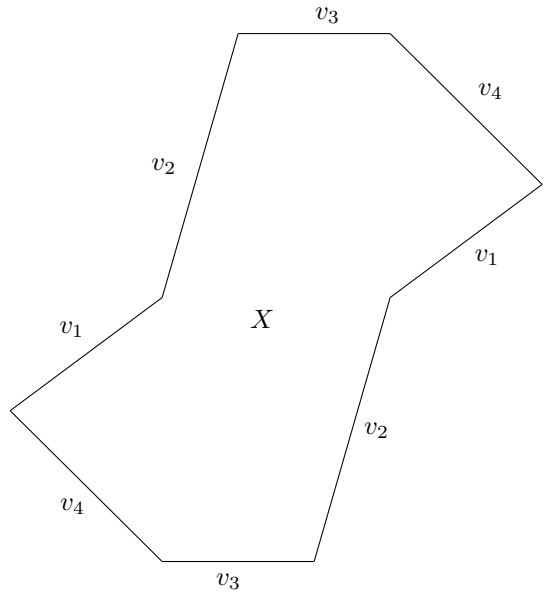
\begin{figure}[h] \centering \label{ab: fig2}
	\begin{tikzpicture}[baseline=(current bounding box.north)] \centering
	
	\begin{scope}

	\draw (0,0) -- (2,1.5);
	\draw (2,1.5) -- (3,5);
	\draw (3,5) -- (5,5);
	\draw (5,5) -- (7,3);
	\draw (0,0) -- (2,-2);
	\draw (2,-2) -- (4,-2);
	\draw (4,-2) -- (5,1.5);
	\draw (5,1.5) -- (7,3);
	\node[below left= 0mm of {(1.1,1.3)}] {$v_1$};
	\node[below right= 0.5mm of {(1.7,3.5)}] {$v_2$};
	\node[below left= 0.5mm of {(4.5,5.5)}] {$v_3$};
	\node[below right= 0.5mm of {(6,4.5)}] {$v_4$};
	\node[below left= 0.5mm of {(6.6,2.3)}] {$v_1$};
	\node[below right= 0.5mm of {(4.5,0)}] {$v_2$};
	\node[below left= 0.5mm of {(3.2,-2)}] {$v_3$};
	\node[below right= 0.5mm of {(0.5,-1)}] {$v_4$};
	\node[below right= 0.5mm of {(3,1.5)}] {$X$};

	\end{scope}
	\end{tikzpicture} \caption{Genus 2 translation surface}
	\end{figure}
We identify each pair of $v_i,v_i$ by translation. We see that there are some special points in the picture: the vertices of the polygon. It can be easily seen, by chasing the identifications, that all vertices are identified with each other and correspond to the same point $p$.\\

	At $p$, the angle is $6\pi = (2\pi) \cdot 3$ and $X$ has genus $2$ because of the topological Euler characteristic: \begin{abclist}
	 \item one vertex: $(p)$ 
	 \item four edges: $(v_1,v_2,v_3,v_4)$ 
	 \item one face: $(X)$ 
	 \end{abclist}
	\[ 1+1-4 = 2 -2g \Rightarrow g =2.  \] \par
	We remark that points on the edges of the polygon may look special. However, the angle around any such point that is not a vertex is $2\pi$, and hence the only special points are the vertices. \par
\end{ex}
\begin{prop}
	The two definitions of a translation surface are equivalent. 
\end{prop}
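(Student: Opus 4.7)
The plan is to prove each implication by exploiting the same basic observation: integrating $\omega$ gives a flat coordinate away from zeros, and taking an appropriate root of a flat coordinate furnishes a holomorphic coordinate at zeros.

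First I would argue that Definition~\ref{ab:def2} implies Definition~\ref{ab:def1}. Let $\omega$ be a holomorphic one-form on $X$ with zeros $p_1, \dots, p_n$ of orders $m_1, \dots, m_n$. Away from the zeros, $\omega$ is nowhere vanishing, so a local antiderivative $z = \int \omega$ is a biholomorphic local coordinate in which $\omega = dz$. Two such antiderivatives on an overlap differ by an additive constant, so the transition functions are translations, giving the flat structure required by Definition~\ref{ab:def1}. Near a zero $p_i$, the holomorphic one-form admits the local normal form $\omega = d(w^{m_i+1})$ for a suitable coordinate $w$ (this is the standard consequence of writing $\omega = w^{m_i}(\mathrm{unit})\,dw$ and absorbing the unit by a biholomorphic change of variable); then $z = w^{m_i+1}$ exhibits the punctured neighborhood as an $(m_i+1)$-fold branched cover of a planar disk, so the flat angle at $p_i$ is $2\pi(m_i+1)$.

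Conversely, to deduce Definition~\ref{ab:def2} from Definition~\ref{ab:def1}, note that translations of $\mathbb{R}^2 \cong \mathbb{C}$ are biholomorphisms, so the flat atlas on $X \setminus \{p_1, \dots, p_n\}$ is already a holomorphic atlas; moreover $d(z+c)=dz$, so the local forms $dz$ patch into a nowhere vanishing holomorphic one-form on $X \setminus \{p_1, \dots, p_n\}$. To extend both the complex structure and the one-form across each $p_i$, use the hypothesis that a punctured neighborhood of $p_i$ is built from $m_i+1$ flat disks glued cyclically at the center (as in the figure of semidisks above). This neighborhood therefore admits a degree-$(m_i+1)$ map to a planar disk via the flat coordinate $z$, totally ramified at $p_i$; taking an $(m_i+1)$-st root produces a uniformizer $w$ with $z = w^{m_i+1}$, and then $\omega = dz = (m_i+1)\, w^{m_i}\, dw$ extends holomorphically across $p_i$ with a zero of exact order $m_i$.

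The main obstacle is the passage at the singular points: one has to convert the combinatorial gluing of semidisks into a single honest holomorphic chart and check that the one-form extends there. The key observation is that, in the flat coordinate, traversing a small loop around $p_i$ returns to the starting value only after $m_i+1$ revolutions, which is precisely what characterizes a punctured neighborhood of $p_i$ as a cyclic branched cover of a planar disk. Once this local structure is identified, both implications reduce to routine verifications, and one can further record the correspondence between zero orders and cone angles to match the integer $m_i$ on both sides.
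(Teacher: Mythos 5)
Your proposal is correct and follows essentially the same route as the paper: away from the zeros you define $\omega = dz$ (respectively take local antiderivatives of $\omega$ as flat charts, with translation transitions coming from the additive ambiguity of the primitive), and at a zero you use the normal form $z = u^{m_i+1}$, $\omega = (m_i+1)u^{m_i}\,du$ to match the cone angle $2\pi(m_i+1)$ with the zero order $m_i$. You in fact spell out some details (the local normal form at zeros and the branched-cover identification of the cone point) that the paper leaves to the reader, but the underlying argument is the same.
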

\begin{proof}
	"Definition \ref{ab:def1} $\Rightarrow$ Definition \ref{ab:def2}: \\
	A consequence of our description would be that $X$ inherits a complex structure from $\mathbb{C}$. 
	We can define a one-form $\omega$ as: \begin{bulist} 
	\item Away from $p_i$, define $\omega = dz$ where $z$ is a local coordinate for a chart as in the definition of a translation surface. Because the transition functions are all translations, it follows that $\omega$ does not depend on the choice of such a local coordinate, and hence it is well-defined.
	\item At a special point $p_i$, there exists a local coordinate $u$, unique up to multiplication by a $(m_i+1)$-th root of unity, such that $u(p_i) = 0$ and $\omega = (m_i+1)u^{m_i}du$. We can think of $\omega$ as the pullback of $dz$ under the covering map $z = u^{m_i+1}$ which is a local isometry onto its image except at $p_i$. \end{bulist}
	\par
	This description allows us to extend the one-form to the points $p_1,...,p_n$ with zero orders $m_1,...,m_n$ correspondingly. \\ 
	 
	"Definition \ref{ab:def2} $\Rightarrow$ Definition \ref{ab:def1}": \\
	For this implication we provide a way to construct such charts as in the first definition. We get a local coordinate chart by fixing a point $p$ of $X$ and integrating along a curve with endpoints $p$ and $x$. The fact that the integral of $\omega$ is an invariant along homotopic curves implies that indeed the transition functions are translations. We leave to the reader the task of describing how to correctly choose such curves such that we indeed get local charts through this construction. 
	
\end{proof}
\begin{remark}
	In Example \ref{ab:h(2)}, the one-form $\omega$ looks like $d(u^3) = 3u^2du$ near the point $p$. In particular it follows that $\omega$ has a double zero at $p$. \par
\end{remark}
\par 
We are mainly interested in the geometric description using polygons and glueings. Hence it would make sense to state another definition of a translation surface that better fits in our study. \\
\begin{defn} \label{ab:def3}
	A translation surface is, up to equivalence, a finite union of polygons in $\mathbb{C}$ together with a choice of pairing for equal and parallel edges via translation. The edges that are paired and glued are required to be on "different sides" of the respective polygon. Two such unions define the same translation surface if we can cut one of them along straight lines and the pieces can be translated and glued together to obtain the second union of polygons. \par
	We remark that when a polygon is cut in two pieces, the newborn edges will be glued together. 
\end{defn} \ \\

We state the following proposition whose slogan would be: "We do not lose the generality when we restrict our attention to collections of polygons with glued edges". For a sketch of proof of this proposition we refer the reader again to \cite{abwri15}.
\begin{prop}
	Definition \ref{ab:def3} is equivalent to the previous definitions of a translation surface. 
\end{prop}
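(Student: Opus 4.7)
The plan is to prove Definition \ref{ab:def3} $\iff$ Definition \ref{ab:def1}, since the preceding proposition already provides Definition \ref{ab:def1} $\iff$ Definition \ref{ab:def2}.

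For the forward direction, Definition \ref{ab:def3} $\Rightarrow$ Definition \ref{ab:def1}, I would first form the quotient topological space $X$ from the disjoint union of the given polygons modulo the edge identifications. Standard arguments show $X$ is compact, connected (by the parallel-edge pairing), and orientable, since translations preserve the standard orientation of $\mathbb{C}$. The flat atlas is then produced point by point: an interior point of a polygon is already covered by the polygon itself, and a point in the relative interior of an edge admits a neighborhood built from two half-disks glued by the pairing translation, which is manifestly flat. At a vertex class $p$, I would compute the total cone angle by tracking a tangent direction as one rotates through the polygonal corners meeting at $p$. Because each edge identification is a translation, and translations preserve tangent directions, after a complete loop the direction returns to itself; this forces the total angle to be $2\pi(m+1)$ for some $m \in \mathbb{Z}_{\geq 0}$. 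The vertices with $m > 0$ are then the special points of Definition \ref{ab:def1}.

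For the reverse direction, Definition \ref{ab:def1} $\Rightarrow$ Definition \ref{ab:def3}, the strategy is to construct a geodesic triangulation of $X$ whose vertex set equals $\{p_1, \dots, p_n\}$. Using the flat metric, I would consider saddle connections, namely straight geodesic segments between special points containing no special point in their interior. Compactness of $X$ guarantees enough saddle connections to produce a triangulation (for instance via the Voronoi decomposition of $X$ with respect to $\{p_1, \dots, p_n\}$ and its geodesic dual) whose triangles contain no special point in their interiors. Each such triangle is simply connected and flat, so the developing map lifts it isometrically to a Euclidean triangle in $\mathbb{C}$; collecting these triangles yields a finite union of polygons, and the induced pairing of boundary edges is by translations, because on every overlap the transition function of the flat atlas is, by hypothesis, a translation.

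The main obstacle is verifying the existence of the geodesic triangulation in the reverse direction: one must show that saddle connections are abundant enough to triangulate $X$ and that the resulting triangles genuinely avoid the cone points in their interiors. A subsidiary but important check is that the cut-and-paste equivalence built into Definition \ref{ab:def3} induces the same object of Definition \ref{ab:def1}; this is essentially automatic, since any straight cut is itself a geodesic segment and re-gluing along such a cut restores the same flat atlas and the same transition functions.
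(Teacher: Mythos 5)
Your sketch is essentially correct, and it is the argument the paper itself declines to give: the text offers no proof of this proposition, only a pointer to \cite{abwri15}, and your two directions --- the quotient construction with the holonomy/angle argument for Definition \ref{ab:def3} $\Rightarrow$ Definition \ref{ab:def1}, and a geodesic (Delaunay-type) triangulation by saddle connections with vertices at the cone points for the converse --- are exactly the standard route taken in that reference. A few small points to tidy up: connectedness of the glued surface is not a consequence of Definition \ref{ab:def3} and must be assumed (or built into the definition); in the case $g=1$ the one-form has no zeroes, so the set of special points is empty and the triangulation must be taken with respect to an auxiliary marked point (equivalently, one exhibits the flat torus directly as a parallelogram with opposite sides identified); and the Delaunay cells need not literally be triangles, but they are flat polygons isometrically developable to $\mathbb{C}$ and can be subdivided, after which the paired edges develop to equal, parallel segments glued by translation on different sides, as Definition \ref{ab:def3} requires.
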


\begin{ex} \label{ab:h(1,1)}
	$\bullet$ $g = 2$ (with two simple zeroes)
	Consider a polygon defined by five vectors $v_1,v_2,v_3,v_4$ and $v_5$ as in the following picture: \\ 
	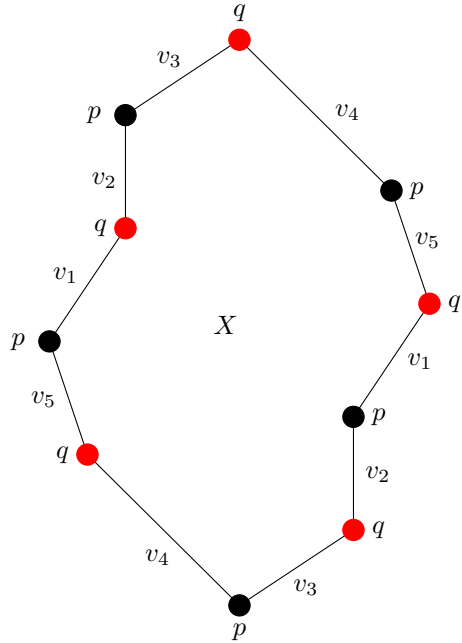
\begin{figure}[H] \centering
	\begin{tikzpicture}[baseline=(current bounding box.north)]
	\begin{scope}

	\draw (0,0) -- (1,1.5);
	\draw (1,1.5) -- (1,3);
	\draw (1,3) -- (2.5,4);
	\draw (2.5,4) -- (4.5,2);
	\draw (4.5,2) -- (5,0.5);
	\draw (5,0.5) -- (4,-1);
	\draw (4,-1) -- (4,-2.5);
	\draw (4,-2.5) -- (2.5,-3.5);
	\draw (2.5,-3.5) -- (0.5,-1.5);
	\draw (0.5,-1.5) -- (0,0);
	\node[below left= 0mm of {(0.5,1.1)}] {$v_1$};
	\node[below right= 0.5mm of {(0.4,2.4)}] {$v_2$};
	\node[below left= 0.5mm of {(1.9,4)}] {$v_3$};
	\node[below right= 0.5mm of {(3.6,3.3)}] {$v_4$};
	\node[below left= 0.5mm of {(5.3,1.6)}] {$v_5$};
	\node[below left= 0.5mm of {(5.2,0)}] {$v_1$};
	\node[below right= 0.5mm of {(4,-1.5)}] {$v_2$};
	\node[below left= 0.5mm of {(3.7,-3)}] {$v_3$};
	\node[below right= 0.5mm of {(1.1,-2.6)}] {$v_4$};
	\node[below right= 0.5mm of {(-0.4,-0.5)}] {$v_5$};
	\node[below right= 0.5mm of {(2,0.5)}] {$X$};
	\node[left= 1mm of {(-0.1,0)}] {$p$};
	\node[left= 1mm of {(0.9,3)}] {$p$};
	\node[right= 1.2mm of {(4.5,2)}] {$p$};
	\node[right= 1.2mm of {(4,-1)}] {$p$};
	\node[below= 1.2mm of {(2.5,-3.5)}] {$p$};
	\node[mark size=4pt,color=black] at (0,0)
	{\pgfuseplotmark{*}};
	\node[mark size=4pt,color=black] at (1,3)
	{\pgfuseplotmark{*}};
	\node[mark size=4pt,color=black] at (4.5,2)
	{\pgfuseplotmark{*}};
	\node[mark size=4pt,color=black] at (4,-1)
	{\pgfuseplotmark{*}};
	\node[mark size=4pt,color=black] at (2.5,-3.5)
	{\pgfuseplotmark{*}};
	\node[mark size=4pt,color=red] at (1,1.5)
	{\pgfuseplotmark{*}};
	\node[mark size=4pt,color=red] at (2.5,4)
	{\pgfuseplotmark{*}};
	\node[mark size=4pt,color=red] at (5,0.5)
	{\pgfuseplotmark{*}};
	\node[mark size=4pt,color=red] at (4,-2.5)
	{\pgfuseplotmark{*}};
	\node[mark size=4pt,color=red] at (0.5,-1.5)
	{\pgfuseplotmark{*}};
	\node[left= 1.2mm of {(1,1.5)}] {$q$};
	\node[above= 1.2mm of {(2.5,4)}] {$q$};
	\node[right= 1.2mm of {(5,0.5)}] {$q$};
	\node[right= 1.2mm of {(4,-2.5)}] {$q$};
	\node[left= 1.2mm of {(0.5,-1.5)}] {$q$};
	
	\end{scope}
	\end{tikzpicture} \caption{Genus 2 translation surface with two simple zeroes}
	\end{figure}

	\par
	We get after the identification of $v_i,v_i$ for $i = \overline{1,5}$ that the vertices glue to two distinct points $p$ and $q$.
	At $p$ and $q$ the angles are both $4\pi = (2\pi)(1+1)$. \par 
	In summary, this is a translation surface of genus $2$ with two simple zeroes of $\omega$ at $p$ and $q$.
\end{ex} \par
\begin{ex}
	$\bullet$ $g = 1$ \par
	\begin{figure}[H] \centering
		
		\begin{tikzpicture}[baseline=(current bounding box.north)] 
		
		\begin{scope}
		\draw (0.3,-0.3) -- (1.3,1.5);
		\draw (1.3,1.5) -- (3.8,1.5);
		\draw (3.8,1.5) -- (2.8,-0.3);
		\draw (2.8,-0.3) -- (0.3,-0.3);
		\node[below right= 0.5mm of {(2,2)}] {$v_2$};
		\node[below right= 0.5mm of {(1.4,-0.3)}] {$v_2$};
		\node[below right= 0.5mm of {(0.1,0.9)}] {$v_1$};
		\node[below right= 0.5mm of {(3.3,0.8)}] {$v_1$};
		\node[below right= 0.5mm of {(5,0.7)}] {$\iff$};
		\node[below right= 0.5mm of {(7.3,-0.8)}] {Everywhere flat torus};
		\end{scope}
		\useasboundingbox (8,-0.5) rectangle (11,1.5);
		\draw (9,0.5) ellipse (2 and 1);
		\begin{scope}
		\clip (9,-0.7) ellipse (2 and 1.66);
		\draw (9,1.96) ellipse (2 and 1.66);
		\end{scope}
		\begin{scope}
		\clip (9,1.96) ellipse (2 and 1.66);
		\draw (9,-0.94) ellipse (2 and 1.66);
		\end{scope}
		\end{tikzpicture}
		\caption{Genus one translation surface}
	\end{figure}
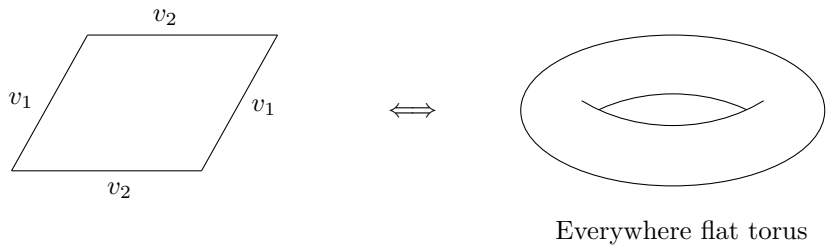
\end{ex}
The following is a well-known fact about Riemann surfaces. The reader interested to learn more about this and other basic properties about Riemann surfaces is invited to check \cite{abmir95}.
\begin{prop} \label{ab:sum2g-2} If $\omega$ is a non-trivial holomorphic one-form on a genus g Riemann surface X, then the total number of zeroes of $\omega$ (counting multiplicities) is $2g-2$.
\end{prop}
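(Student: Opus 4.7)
The approach is to invoke the equivalence of Definitions \ref{ab:def1} and \ref{ab:def2} in order to translate the vanishing of $\omega$ into a statement about cone angles. Under that equivalence, a zero of order $m_i$ at $p_i$ carries cone angle $2\pi(m_i+1)$, so the quantity of interest is $\sum_i m_i$ and it suffices to prove $\sum_i m_i = 2g-2$.

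To carry this out, I would pass through Definition \ref{ab:def3} and fix a polygonal model for $(X,\omega)$: a finite union of $F$ polygons with a total of $2n$ edges paired by translation. After identification this gives a CW-structure on $X$ with $F$ faces, $n$ edges and some number $V$ of vertex classes, so the Euler characteristic reads $V - n + F = 2 - 2g$. The second ingredient is the elementary fact that the interior angles of a $k$-gon sum to $(k-2)\pi$, which after summing over the polygons yields $\sum_v 2\pi(m_v + 1) = (2n - 2F)\pi$, where each vertex class $v$ contributes its total cone angle $2\pi(m_v+1)$ (allowing $m_v = 0$ at ordinary flat points that happen to sit at polygon corners). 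Eliminating $V$, $n$ and $F$ between these two linear relations yields precisely $\sum_v m_v = 2g - 2$, where the sum ranges over all vertex classes, and hence effectively over the genuine zeros of $\omega$.

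The main obstacle is not the bookkeeping but the appeal to a polygonal presentation: one needs the preceding proposition guaranteeing that every pair $(X,\omega)$ admits such a description, which is already nontrivial. A cleaner but less self-contained alternative is to apply Gauss--Bonnet directly to the flat cone metric on $X \setminus \{p_1, \dots, p_n\}$: the curvature vanishes away from the $p_i$ and concentrates as a point mass of weight $2\pi - 2\pi(m_i+1) = -2\pi m_i$ at each $p_i$, yielding $-2\pi \sum_i m_i = 2\pi \chi(X) = 2\pi(2-2g)$. From the complex-analytic side, one can argue instead that $\omega$ is a global section of the canonical bundle $K_X$ and that $\deg K_X = 2g-2$, which is essentially the route taken in standard references such as \cite{abmir95}.
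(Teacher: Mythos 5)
Your argument is correct, but it is not the route the paper takes: the text treats this proposition as a known fact and simply refers the reader to \cite{abmir95}, where it follows from the complex-analytic/algebraic side (a holomorphic one-form is a section of the canonical bundle, whose degree is $2g-2$, e.g.\ via Riemann--Roch or the residue theorem) --- essentially the third option you mention in passing. Your main worked-out proof is instead a flat-geometric one: fix a polygonal presentation as in Definition \ref{ab:def3} with $F$ polygons and $n$ edge pairs, compare the Euler characteristic $V-n+F=2-2g$ with the total interior angle $(2n-2F)\pi=\sum_v 2\pi(m_v+1)$, and eliminate to get $\sum_v m_v=2g-2$; the bookkeeping is right, including the allowance of $m_v=0$ at vertex classes that are regular points, and the fact that each cone angle is an integer multiple of $2\pi$ is justified because all gluings are translations, so the rotational holonomy around a vertex is trivial. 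The trade-off is the one you identify yourself: your proof stays entirely inside the translation-surface formalism and is elementary, but it leans on the equivalence with Definition \ref{ab:def3} (the existence of a polygonal model), which the paper also only cites (\cite{abwri15}) rather than proves, and it only covers forms admitting such a presentation, i.e.\ exactly the translation-surface setting; the canonical-bundle argument the paper points to is shorter, needs no decomposition, and generalizes immediately to meromorphic and higher-order differentials, which is why the paper defers to it. Your Gauss--Bonnet variant is a clean repackaging of the same angle count (curvature $-2\pi m_i$ concentrated at $p_i$ against $2\pi\chi(X)$) and is equally valid.
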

\section{Strata of Abelian Differentials}\ \par
We would be interested in considering families of Abelian differentials. This new perspective we want to follow is common in Algebraic Geometry and provides many advantages, especially as we are interested in the following two features: classification and parametrization of translation surfaces. \\

 As a consequence, the focus becomes the study of the geometry of the space $\mathcal{H}$ parametrizing up to isomorphism pairs $(X,\omega)$ where $X$ is a Riemann surface of genus $g$ and $\omega$ is a holomorphic one-form on $X$. The meaning of the word "isomorphism" in this context will be provided in Remark \ref{ab:iso}. This perspective, together with the precise definitions of the considered families can be found in \cite{abACG11}. \\ 
 
Proposition \ref{ab:sum2g-2} provides a restriction on the multiplicities of the zeroes of a one-form $\omega$. Accordingly, we will restrict our attention to this only possible case and define $\mu = (m_1,...,m_n)$ to be a positive partition of $2g-2$, i.e. $m_i \in \mathbb{Z}^{+}$ for every $i = \overline{1,n}$ and $m_1+m_2+...+m_n = 2g-2$. \\ 
\begin{defn}
	We define $\mathcal{H}(\mu)$ to be the subset of $\mathcal{H}$ parametrizing up to isomorphism pairs $(X,\omega)$ where $X$ is a Riemann surface of genus $g$ and $\omega$ is a one-form on $X$ of zero type $\mu$, i.e. $\omega$ has exactly $n$ zeroes $p_1,...,p_n$ such that at each $p_i$, $\omega$ has zero order $m_i$. \par
	The space $\mathcal{H}(\mu)$ is called the stratum of Abelian differentials of type $\mu$. \\
\end{defn} 
\begin{remark} \label{ab:iso}
	Two pairs $(X_1,\omega_1)$ and $(X_2,\omega_2)$ are called isomorphic if and only if there exists an isomorphism of Riemann surfaces $f\colon X_1\rightarrow X_2$ such that $f^{*}\omega_2 = \omega_1$. \\
\end{remark} 
\begin{remark}
	Example \ref{ab:h(2)} and Example \ref{ab:h(1,1)} are examples of elements of the strata $\mathcal{H}(2)$ and $\mathcal{H}(1,1)$ respectively.
\end{remark}

 \subsection{Period coordinates on $\mathcal{H}(\mu)$}
Take $(X,\omega)$ to be an element of $\mathcal{H}(\mu)$ and let 
$(\omega)_0 =  m_1p_1 + ...+m_np_n$ be the zero divisor associated to the one-form $\omega$. Consider $H_1(X,p_1,...,p_n;\mathbb{Z})$ be the first homology group relative to $p_1,...,p_n$. We know from Algebraic Topology that this group has rank $2g+n-1$. \\

It is important to understand where this number comes from as we will immediately see, this will help us understand the dimension of $\mathcal{H}(\mu)$. Being more precise the term $2g$ appears from a symplectic basis of the first homology group $H_1(X,\mathbb{Z})$ while the term $n-1$ comes from the relative part, i.e. the additional $n-1$ generators coming from curves connecting the point $p_1$ to $p_i$ for all $i = \overline{2,n}$. \\

Our goal is to prove that integrating $\omega$ along a basis of $H_1(X,p_1,...,p_n;\mathbb{Z})$ we get local coordinates on $\mathcal{H}(\mu)$. We will now specify the basis we are taking to define the local coordinates. 
\par 
We know that a genus $g$ Riemann surface can be obtained from a $4g$-gon by considering a glueing of the edges as in the next picture, where the order of the edges is $\gamma_1\gamma_2\gamma_1^{-1}\gamma_2^{-1}...\gamma_{2g-1}\gamma_{2g}\gamma_{2g-1}^{-1}\gamma_{2g}^{-1}$. Then, we consider the curves $\gamma_1,...,\gamma_{2g},\gamma_{2g+1},...,\gamma_{2g+n-1}$ as below. \\
\begin{figure}[H] \centering
\begin{tikzpicture}[baseline=(current bounding box.north)]

\begin{scope}

\draw (0,0) -- (2,0);
\draw (0,0) -- ($(0,0)+sqrt(2)*(-1,1)$);
\draw (2,0) -- ($(2,0)+sqrt(2)*(1,1)$);
\draw[dashed] ($sqrt(2)*(-1,1)$)-- ($(0,2)+sqrt(2)*(-1,1)$) ;
\draw[dashed] ($(2,0)+sqrt(2)*(1,1)$) -- ($(2,2)+sqrt(2)*(1,1)$);
\draw ($(2,2)+sqrt(2)*(1,1)$) -- ($(2,2)+sqrt(2)*(0,2)$);
\draw ($(0,2)+sqrt(2)*(-1,1)$) -- ($(0,2)+sqrt(2)*(0,2)$);
\draw ($(0,2)+sqrt(2)*(0,2)$) -- ($(2,2)+sqrt(2)*(0,2)$);

\draw  ($(0.5,0)+sqrt(2)*(0,1)$) to[out=0, in=-120]  ($(0,2)+sqrt(2)*(0,1)$);

\draw ($(0.5,0)+sqrt(2)*(0,1)$) to[out=0, in=-120]  ($(0,2)+sqrt(2)*(1,1)$);
\draw  ($(0.5,0)+sqrt(2)*(0,1)$)to[out=0, in=-120]   ($(2,2)+sqrt(2)*(0.5,1)$);
\node[below left= 0mm of {(-0.8,0.8)}] {$\gamma_1$};
\node[below left= 0mm of {(1,0)}] {$\gamma_2$};
\node[below left= 0mm of {(3.6,1)}] {$\gamma_1^{-1}$};
\node[below left= 0mm of {(3.7,4.5)}] {$\gamma_{2g-1}$};
\node[below left= 0mm of {(1.2,5.3)}] {$\gamma_{2g}$};
\node[below left= 0mm of {(-1,4.5)}] {$\gamma_{2g-1}^{-1}$};
\node[below left= 0mm of {(0.5,1.4)}] {$p_1$};
\node[above = 0mm of {(0,3.5)}] {$p_2$};
\node[above= 0mm of {(1.3,3.5)}] {$p_3$};
\node[above= 0mm of {(2.5,3.5)}] {$p_n$};
\node[mark size=2pt,color=black] at ($(0.5,0)+sqrt(2)*(0,1)$)
{\pgfuseplotmark{*}};
\node[mark size=2pt,color=black] at ($(0,2)+sqrt(2)*(0,1)$)
{\pgfuseplotmark{*}};
\node[mark size=2pt,color=black] at ($(0,2)+sqrt(2)*(1,1)$) 
{\pgfuseplotmark{*}};
\node[mark size=2pt,color=black] at ($(2,2)+sqrt(2)*(0.5,1)$)
{\pgfuseplotmark{*}};

\end{scope}
\end{tikzpicture} \caption{Basis of the relative homology}
\end{figure}
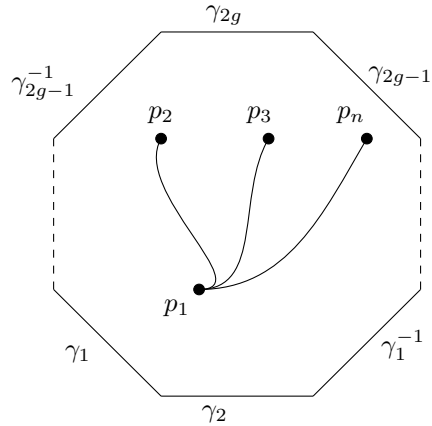
The first $2g$ curves form a symplectic basis of $H_1(X;\mathbb{Z})$ while the other $n-1$ represent the relative part. We have in fact that the $2g+n-1$ curves form a basis of $H_1(X,p_1,...,p_n;\mathbb{Z})$. \\

We claim that: 
\[ \Big(\int\limits_{\gamma_1}\omega,\int\limits_{\gamma_2}\omega,...,\int\limits_{\gamma_{2g+n-1}}\omega\Big) \]
provide local coordinates of $\mathcal{H}(\mu)$, called period coordinates. We hence have the following: 
\begin{cor}
	$\mathcal{H}(\mu)$ is a complex "manifold" of dimension $2g+n-1$. 
\end{cor}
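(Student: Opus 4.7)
The plan is to show that the period map is a local biholomorphism onto an open subset of $\mathbb{C}^{2g+n-1}$, so that $\mathcal{H}(\mu)$ inherits a complex manifold structure of the claimed dimension. Fix $(X_0,\omega_0)\in\mathcal{H}(\mu)$ together with the basis $\gamma_1,\ldots,\gamma_{2g+n-1}$ of $H_1(X_0,\{p_1,\ldots,p_n\};\mathbb{Z})$ described above. On a small open neighborhood $U$ of $(X_0,\omega_0)$ the underlying marked pair $(X,\{p_i\})$ is diffeomorphic to $(X_0,\{p_i\})$ (Ehresmann's fibration theorem), so each $\gamma_i$ transports uniquely to every nearby fibre as a relative homology class, and integration defines the period map
\[
\Phi\colon U\longrightarrow\mathbb{C}^{2g+n-1},\qquad \Phi(X,\omega)=\Bigl(\int_{\gamma_1}\omega,\ldots,\int_{\gamma_{2g+n-1}}\omega\Bigr).
\]

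The main work is constructing a local inverse. Using Definition \ref{ab:def3}, I would choose a geodesic triangulation $\mathcal{T}$ of $(X_0,\omega_0)$ whose vertex set is exactly $\{p_1,\ldots,p_n\}$; it exists by refining the polygonal representation. Each triangle $T\in\mathcal{T}$ develops isometrically onto a Euclidean triangle in $\mathbb{C}$ whose side vectors are the edge periods $\int_e\omega_0$, and the surface is recovered by regluing these triangles by translation along the prescribed edge pairings. Conversely, any assignment of complex numbers to the edges of $\mathcal{T}$ that closes around each triangle and keeps every triangle nondegenerate glues back to a translation surface; for assignments close enough to $(\int_e\omega_0)_e$ the cyclic order of edges around each $p_i$ is preserved, so the angles remain $2\pi(m_i+1)$ and the new surface stays in $\mathcal{H}(\mu)$. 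With $V=n$, $3F=2E$ and Euler's formula $V-E+F=2-2g$ giving $E-F=n+2g-2$, and with a single global dependence among the $F$ closure relations (their oriented sum vanishes since each edge appears twice with opposite signs), the space of closed edge assignments has $\mathbb{C}$-dimension $E-(F-1)=2g+n-1$. Prescribed periods $(z_1,\ldots,z_{2g+n-1})$ near $\Phi(X_0,\omega_0)$ thus determine all edge vectors uniquely via linear algebra, and the resulting glued surface lies in $U$ and maps to $(z_i)$ under $\Phi$.

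The main obstacle is verifying that small perturbations of the edge vectors really produce an element of the same stratum rather than a degeneration: one must check that no triangle collapses, no new zero appears in the interior of a triangle or along an edge, and no zero order at a $p_i$ drops. Each of these is an open condition that holds at $(X_0,\omega_0)$, so it persists on a neighborhood. With the local inverse in place $\Phi$ is a homeomorphism onto its image, and the complex structure on $\mathcal{H}(\mu)$ is defined by declaring $\Phi$ a biholomorphism; this is independent of choices because changes of homology basis and of the reference point induce $\mathbb{C}$-linear, hence holomorphic, transition maps among the period coordinates.
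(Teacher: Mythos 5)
Your proof is correct and takes essentially the same approach as the paper: the text's heuristic treats the period coordinates as the edge vectors of the flat polygonal presentation and perturbs them while keeping the gluings fixed, which is exactly what your triangulation chart formalizes, with the count $E-(F-1)=2g+n-1$ recovering the rank of $H_1(X,p_1,\ldots,p_n;\mathbb{Z})$ invoked in the text. Since the paper itself only sketches this argument (deferring formal proofs to its references and noting in a remark that ``manifold'' really means orbifold, consistent with your not treating automorphisms), your write-up is simply a fleshed-out version of the same argument.
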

\par  A heuristic argument for the claim is the following: We should really think of the period coordinates as the edges of the flat polygon representation corresponding to $\omega$, viewed as complex vectors. If we perturb these coordinates a little bit, we get a perturbation of the polygon by changing its shape a little bit. However, the glueings do not change under such a perturbation and hence the new translation surface we obtain lies in a neighbourhood of the point $(X,\omega)$ in $\mathcal{H}(\mu)$. \\

\begin{remark}
	The reason we put the word "manifold" under quotation marks is because this is actually not the exact statement of the results. In fact, $\mathcal{H}(\mu)$ is a complex orbifold of dimension $2g+n-1$, where the orbifold structure is due to automorphisms of special $(X,\omega)$. \\
\end{remark}
\begin{remark}
	There are many papers studying the dimension of the space $\mathcal{H}(\mu)$ or its variants. A formal proof of this dimension count can be found in \cite{abvee93}, Theorem $0.3$. Other proofs, coming from Deformation Theory can be found in \cite{abpol06}, \cite{abmo8} and \cite{abmon17}. Besides these articles answering the question of dimension for the strata of holomorphic Abelian differentials, there exist generalizations for meromorphic and higher order differentials. A dimension count using methods from analytic and flat geometry for the case of quadratic differentials can be found in \cite{abvee86}. An argument generalizing the result in \cite{abpol06} to the meromorphic case can be found in \cite{abfp18}, while a generalization of the arguments for higher differentials can be found in \cite{absch18}. Another proof treating the case of higher differentials can be found in \cite{abbcg19}. 
\end{remark}
\begin{ex} Take a translation surface as in Example \ref{ab:h(2)}: \\
	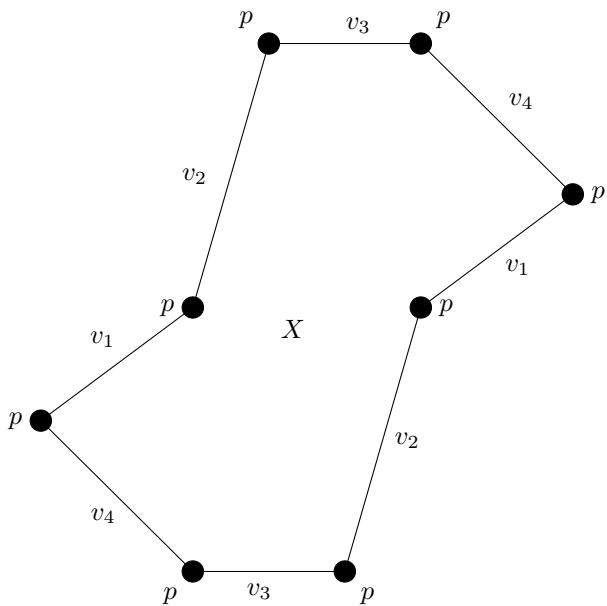
\begin{figure}[H] \centering
	\begin{tikzpicture}[baseline=(current bounding box.north)]
	
	\begin{scope}

	\draw (0,0) -- (2,1.5);
	\draw (2,1.5) -- (3,5);
	\draw (3,5) -- (5,5);
	\draw (5,5) -- (7,3);
	\draw (0,0) -- (2,-2);
	\draw (2,-2) -- (4,-2);
	\draw (4,-2) -- (5,1.5);
	\draw (5,1.5) -- (7,3);
	\node[below left= 0mm of {(1.1,1.3)}] {$v_1$};
	\node[below right= 0.5mm of {(1.7,3.5)}] {$v_2$};
	\node[below left= 0.5mm of {(4.5,5.5)}] {$v_3$};
	\node[below right= 0.5mm of {(6,4.5)}] {$v_4$};
	\node[below left= 0.5mm of {(6.6,2.3)}] {$v_1$};
	\node[below right= 0.5mm of {(4.5,0)}] {$v_2$};
	\node[below left= 0.5mm of {(3.2,-2)}] {$v_3$};
	\node[below right= 0.5mm of {(0.5,-1)}] {$v_4$};
	\node[below right= 0.5mm of {(3,1.5)}] {$X$};
	\node[mark size=4pt,color=black] at (0,0)
	{\pgfuseplotmark{*}};
	\node[mark size=4pt,color=black] at (2,1.5)
	{\pgfuseplotmark{*}};
	\node[mark size=4pt,color=black] at (3,5)
	{\pgfuseplotmark{*}};
	\node[mark size=4pt,color=black] at (5,5)
	{\pgfuseplotmark{*}};
	\node[mark size=4pt,color=black] at (7,3)
	{\pgfuseplotmark{*}};
	\node[mark size=4pt,color=black] at (2,-2)
	{\pgfuseplotmark{*}};
	\node[mark size=4pt,color=black] at (4,-2)
	{\pgfuseplotmark{*}};
	\node[mark size=4pt,color=black] at (5,1.5)
	{\pgfuseplotmark{*}};
	\node[left = 1.2mm of {(0,0)}] {$p$};
	\node[left = 1.2mm of {(2,1.5)}] {$p$};
	\node[above left = 1.2mm of {(3,5)}] {$p$};
	\node[above right = 1.2mm of {(5,5)}] {$p$};
	\node[right = 1.2mm of {(7,3)}] {$p$};
	\node[right = 1.2mm of {(5,1.5)}] {$p$};
	\node[below right = 1.2mm of {(4,-2)}] {$p$};
	\node[below left = 1.2mm of {(2,-2)}] {$p$};

	\end{scope}
	\end{tikzpicture}\par \caption{Translation surface in the stratum $\mathcal{H}(2)$}
	\end{figure}
	In this case $(X,\omega) \in \mathcal{H}(2)$ and by Corollary $ 14 $ we know that the dimension of the stratum is $4$. 
	We see that the edges $v_1,v_2,v_3,v_4$ generate the homology group $H_1(X,p;\mathbb{Z})$. In particular, the period coordinates of $(X,\omega)$ are $v_1,v_2,v_3,v_4$ seen as complex numbers as: 
	\[v_i = \int\limits_{\gamma_i}dz \in \mathbb{C}.\]
\end{ex} 
Recall that we called $\mathcal{H}(\mu)$ the stratum of Abelian differentials of type $\mu$. We will now explain this terminology. The intuition behind is the following: When we take into account all the positive partitions of $2g-2$ and put them together, they provide a stratification of the total space of (nonzero) holomorphic one-forms. More precisely: 
\begin{center} 
	\begin{tikzcd}%
	\bigcup\limits_{\mu \vdash 2g-2}\mathcal{H}(\mu) = 	  &\mathcal{H}_g \setminus \left\{ 0 \right\} \arrow[d, ""]    \\
	& \mathcal{M}_g
	
	\end{tikzcd}%
\end{center} 
By $\mu \vdash 2g-2$ we mean that $\mu$ is a positive partition of $2g-2$. \par
Here $\mathcal{H}_g$ is the space of holomorphic one-forms on genus $g$ Riemann surfaces (regardless of multiplicity of zeroes as it is the union of all partitions), while $\mathcal{M}_g$ is the moduli space of genus $g$ Riemann surfaces. To understand this morphism, take $X \in \mathcal{M}_g$ and consider the fiber over it in $\mathcal{H}_g$. The fiber over $X$ is \[H^0(X,\Omega_X) = \left\{ \text{holomorphic one-forms on $X$} \right\} \cong \mathbb{C}^{g}.\] 
\par 
The space $\mathcal{H}_g$ can be seen as a holomorphic vector bundle of rank $g$ over $\mathcal{M}_g$. It is called the Hodge bundle of rank $g$ over $\mathcal{M}_g$. \\
\begin{ex}
	Take the partition $\mu = (1,1,...,1)\vdash 2g-2$. Then $\mathcal{H}(\mu)$ is open and dense in $\mathcal{H}_g$. \par
	
	It is well-known that the dimension over $\mathbb{C}$ of the moduli space $\mathcal{M}_g$ for $g\geq 2$ is equal to $3g-3$. This fact comes from Deformation Theory and it is related to the fact that there is a correspondence between the space of first order deformations of a curve $X$ and $H^1(X,T_X)$. \par 
	In particular, as $\mathcal{H}_g$ is a rank $g$ bundle over $\mathcal{M}_g$, it follows that \[\dim_\mathbb{C}\mathcal{H}_g = (3g-3)+g = 4g-3.\]
	As $\mathcal{H}(\mu)$ is an open and dense subset of $\mathcal{H}_g$ it follows that: 
	\[ \dim_\mathbb{C}\mathcal{H}(\mu) = 4g-3.\]
	Using Corollary 14, we see that 
	\[ \dim_\mathbb{C}\mathcal{H}(\mu) = 2g+n-1, \]
	but here $n = 2g-2$ and hence the results from the two ways of computing the dimension are consistent with each other. \par
\end{ex} \par 
Let us provide another intuition into why the dimension of $\mathcal{H}(\mu)$ is equal to $2g+n-1$ where $\mu = (m_1,...,m_n)$.\par 
Start with the space $\mathcal{H}(1,1,...,1)$ and view the differentials parametrized by $\mathcal{H}(m_1,...,m_n)$ as limits of differentials with simple zeroes when some of the zeroes merge. We expect that every merging of two zeroes of a differential $\omega$ will drop the dimension by 1. 
\begin{align*}
"\Rightarrow" \dim_\mathbb{C}\mathcal{H}(\mu)  &= \dim_\mathbb{C}\mathcal{H}(1,1,...,1) - (2g-2-n)\\
&= 4g-3 - (2g-2-n) \\
&=2g+n-1. 
\end{align*}
\par 
This calculation of the dimension is only meant to help us form an intuition into the relation between different strata. A formalization of this behaviour can be found in \cite{abkz03}, Proposition 4, which we now state: \\
\begin{prop}(Merging zeroes) \par
	Every connected component of the space $\mathcal{H}(m_1,...,m_n)$ contains in its closure a connected component of $\mathcal{H}(m_1+m_2,m_3,...,m_n)$. \\
\end{prop}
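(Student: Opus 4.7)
The plan is to construct, starting from any $(X,\omega)$ in a connected component $\mathcal{C}\subseteq\mathcal{H}(m_1,\ldots,m_n)$, an explicit continuous one-parameter family inside $\mathcal{C}$ whose limit lies in $\mathcal{H}(m_1+m_2,m_3,\ldots,m_n)$. The construction uses the period-coordinate description together with the geometric operation of collapsing a saddle connection between $p_1$ and $p_2$ to zero length.

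First I would choose a representative $(X,\omega)\in\mathcal{C}$ admitting a saddle connection $\gamma$ from $p_1$ to $p_2$ that is strictly shorter than any other saddle connection sharing an endpoint with it; such a representative is produced by a small period-coordinate deformation inside $\mathcal{C}$. I would then extend $[\gamma]$ to a basis of the relative homology $H_1(X,\{p_i\};\mathbb{Z})$ of the type described before Corollary 14, using $\gamma$ as the first relative generator $\gamma_{2g+1}$. With $z_1,\ldots,z_{2g+n-1}$ the associated period coordinates, I would form the family $(X_t,\omega_t)$ obtained from $(X,\omega)$ by replacing $z_{2g+1}=\int_{\gamma}\omega$ with $t\cdot z_{2g+1}$ while keeping every other coordinate fixed, for $t\in(0,1]$. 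By the local period-coordinate description this family lies in a single chart of $\mathcal{H}(\mu)$, hence entirely inside $\mathcal{C}$.

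The core of the argument is the analysis of the $t\to 0^+$ limit. On $(X_t,\omega_t)$ the saddle connection $\gamma_t$ has length $|tz_{2g+1}|\to 0$, so the two zeros collide. Using the local flat models $\omega=(m_i+1)u^{m_i}du$ near $p_i$, the neighborhood of $\gamma_t\cup\{p_1,p_2\}$ can be described as a slit Euclidean region whose outer boundary carries total cone angle $2\pi(m_1+1)+2\pi(m_2+1)$. As $t\to 0$ this region collapses, and the Gauss--Bonnet bookkeeping (one cone point fewer, total cone angle decreasing by exactly $2\pi$) forces the limit to have a single cone point of angle $2\pi(m_1+m_2+1)$ at the merged location, with the zeros at $p_3,\ldots,p_n$ unchanged. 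Hence the limit $(X_0,\omega_0)$ lies in $\mathcal{H}(m_1+m_2,m_3,\ldots,m_n)\cap\overline{\mathcal{C}}$; the whole connected component of $(X_0,\omega_0)$ in the lower stratum then sits in $\overline{\mathcal{C}}$ because the shrinking construction can be reversed in a neighborhood of $(X_0,\omega_0)$ (splitting the merged zero in all nearby directions), so the intersection is both open and closed in that component.

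The main obstacle I expect is justifying that the limit is a smooth translation surface of the predicted type rather than a nodal or otherwise degenerate object, and that no unintended zeros merge along the way. The key observations are that only a single relative period is shrunk while every absolute period $\int_{\gamma_i}\omega$ for $i\leq 2g$ is fixed, so the compact homology of $X$ never degenerates and no node forms; and that the initial minimality assumption on $\gamma$ ensures no other saddle connection contracts simultaneously to accidentally merge further zeros. Once these two points are secured, the local angle computation identifies the order of the merged zero and the proposition follows.
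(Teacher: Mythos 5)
The notes do not actually prove this proposition --- it is quoted from \cite{abkz03} (Proposition 4) --- so your proposal can only be measured against the argument in that reference. Your overall strategy (contract a saddle connection joining $p_1$ and $p_2$, identify the limit by its cone angle $2\pi(m_1+m_2+1)$, then upgrade the single limit point to a whole component of the smaller stratum by reversing the surgery, i.e.\ by ``breaking up'' the merged zero) is exactly the strategy of Kontsevich--Zorich, so the route is the right one; the problem is in how you carry out the contraction.

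The genuine gap is the step where you scale the single relative period: the claim that the family $z_{2g+1}\mapsto t\,z_{2g+1}$, $t\in(0,1]$, with all other coordinates fixed, ``lies in a single chart of $\mathcal{H}(\mu)$, hence entirely inside $\mathcal{C}$'' is unjustified, and it is precisely where the work lies. Period coordinates are only local: the period map identifies a small neighbourhood of $(X,\omega)$ with an open subset of $\mathbb{C}^{2g+n-1}$, and nothing guarantees that the whole ray down to $t\to 0^{+}$ stays in the image of one chart, nor that for each small $t$ the coordinate vector is realized by a translation surface of type $\mu$ on which the class of $\gamma$ is still represented by a saddle connection and nothing else degenerates. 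Your minimality hypothesis on $\gamma$ is imposed only at $t=1$, is not shown to persist along the deformation, and does not exclude, for instance, several saddle connections from $p_1$ to $p_2$ in the same relative homology class shrinking simultaneously. The standard repair, and the one used in \cite{abkz03}, is to make the operation local: first deform inside $\mathcal{C}$ until $\left|\int_\gamma\omega\right|$ is small compared with the distance from $\gamma$ to the remaining zeroes, so that the contraction is a surgery supported in an embedded flat neighbourhood of $\gamma$ and the surface is untouched outside it; then your Gauss--Bonnet identification of the limit is immediate. Two further assertions also need justification: that a saddle connection from $p_1$ to $p_2$ with your minimality property can be arranged by a small deformation inside $\mathcal{C}$; and, in the open--closed step, that every surface of type $\mu$ sufficiently close to a given point of $\mathcal{H}(m_1+m_2,m_3,\ldots,m_n)$ arises from it by the splitting surgery, with the space of splitting parameters (a short vector together with a choice of sector within total angle $2\pi(m_1+m_2+1)$) connected --- without this you cannot rule out that the surfaces obtained by splitting near another point of the lower component lie in a different component of $\mathcal{H}(\mu)$ accumulating on the same boundary point, and so you could not conclude that the whole component sits in $\overline{\mathcal{C}}$.
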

\begin{remark} We look at the map: \\
		\begin{tikzcd}\centering
		&\mathcal{H}(\mu) \arrow[d, ""] &\ \ \ \ \dim = 2g+n-1    \\
		& \mathcal{M}_g &  \dim = 3g-3
			\end{tikzcd} \\
By dimension reasons, if $2g+n-1 \leq 3g-3$, then a generic curve X does not admit any $\omega$ of type $\mu$. The equality case is included because we have a scaling factor for one-forms in $\mathcal{H}(\mu)$.
In fact, the dimension of the image in $\mathcal{M}_g$ is known for all positive partitions of $2g-2$. This characterization can be found in \cite{abgen18}, Theorem $5.7$. 
\end{remark} 
Before proceeding, let us take a minute and analyse such an example from the point of view of translation surfaces seen as polygons.
\begin{ex} Take the polygon from Example \ref{ab:h(1,1)}. This is a translation surface in the stratum $\mathcal{H}(1,1)$. We do the following thing: We shrink $v_5$ to $0$ and consider the limit case.  
	\begin{figure}[H] \centering
	\begin{tikzpicture}	
	\begin{scope}
	\draw (0,0) -- (1,1.5);
	\draw (1,1.5) -- (1,3);
	\draw (1,3) -- (2.5,4);
	\draw (2.5,4) -- (4.5,2);
	\draw (4.5,2) -- (5,0.5);
	\draw (5,0.5) -- (4,-1);
	\draw (4,-1) -- (4,-2.5);
	\draw (4,-2.5) -- (2.5,-3.5);
	\draw (2.5,-3.5) -- (0.5,-1.5);
	\draw (0.5,-1.5) -- (0,0);
	\node[below left= 0mm of {(0.5,1.1)}] {$v_1$};
	\node[below right= 0.5mm of {(0.4,2.4)}] {$v_2$};
	\node[below left= 0.5mm of {(1.9,4)}] {$v_3$};
	\node[below right= 0.5mm of {(3.6,3.3)}] {$v_4$};
	\node[below left= 0.5mm of {(5.3,1.6)}] {$v_5$};
	\node[below left= 0.5mm of {(5.2,0)}] {$v_1$};
	\node[below right= 0.5mm of {(4,-1.5)}] {$v_2$};
	\node[below left= 0.5mm of {(3.7,-3)}] {$v_3$};
	\node[below right= 0.5mm of {(1.1,-2.6)}] {$v_4$};
	\node[below right= 0.5mm of {(-0.4,-0.5)}] {$v_5$};
	\node[below right= 0.5mm of {(2,0.5)}] {$X$};
	\node[left= 1mm of {(-0.1,0)}] {$p$};
	\node[left= 1mm of {(0.9,3)}] {$p$};
	\node[right= 1.2mm of {(4.5,2)}] {$p$};
	\node[right= 1.2mm of {(4,-1)}] {$p$};
	\node[below= 1.2mm of {(2.5,-3.5)}] {$p$};
	\node[mark size=4pt,color=black] at (0,0)
	{\pgfuseplotmark{*}};
	\node[mark size=4pt,color=black] at (1,3)
	{\pgfuseplotmark{*}};
	\node[mark size=4pt,color=black] at (4.5,2)
	{\pgfuseplotmark{*}};
	\node[mark size=4pt,color=black] at (4,-1)
	{\pgfuseplotmark{*}};
	\node[mark size=4pt,color=black] at (2.5,-3.5)
	{\pgfuseplotmark{*}};
	\node[mark size=4pt,color=red] at (1,1.5)
	{\pgfuseplotmark{*}};
	\node[mark size=4pt,color=red] at (2.5,4)
	{\pgfuseplotmark{*}};
	\node[mark size=4pt,color=red] at (5,0.5)
	{\pgfuseplotmark{*}};
	\node[mark size=4pt,color=red] at (4,-2.5)
	{\pgfuseplotmark{*}};
	\node[mark size=4pt,color=red] at (0.5,-1.5)
	{\pgfuseplotmark{*}};
	\node[left= 1.2mm of {(1,1.5)}] {$q$};
	\node[above= 1.2mm of {(2.5,4)}] {$q$};
	\node[right= 1.2mm of {(5,0.5)}] {$q$};
	\node[right= 1.2mm of {(4,-2.5)}] {$q$};
	\node[left= 1.2mm of {(0.5,-1.5)}] {$q$};
	
	\end{scope}
	\end{tikzpicture} \caption{Translation surface in $\mathcal{H}(1,1)$ before shrinking $v_5$}
\end{figure}
	When $v_5$ becomes $0$, we get a translation surface of the form: \\
	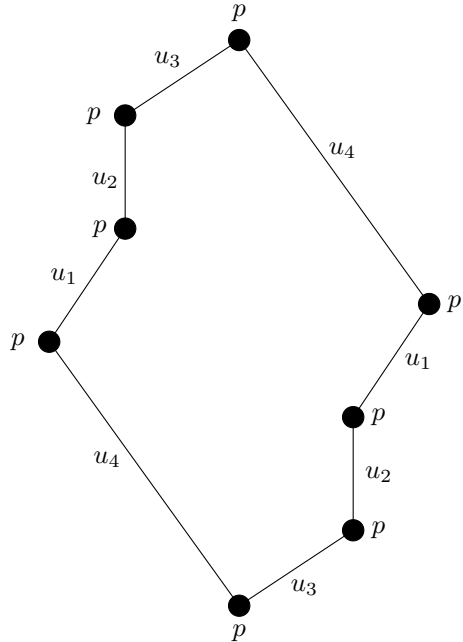
\begin{figure}[H] \centering
	\begin{tikzpicture}
	\begin{scope}

	\draw (0,0) -- (1,1.5);
	\draw (1,1.5) -- (1,3);
	\draw (1,3) -- (2.5,4);
	\draw (2.5,4) -- (5,0.5);
	\draw (5,0.5) -- (4,-1);
	\draw (4,-1) -- (4,-2.5);
	\draw (4,-2.5) -- (2.5,-3.5);
	\draw (2.5,-3.5) -- (0,0);
	\node[left= 1mm of {(-0.1,0)}] {$p$};
	\node[left= 1mm of {(0.9,3)}] {$p$};
	\node[right= 1.2mm of {(4,-1)}] {$p$};
	\node[below= 1.2mm of {(2.5,-3.5)}] {$p$};
	\node[left= 1.2mm of {(1,1.5)}] {$p$};
	\node[above= 1.2mm of {(2.5,4)}] {$p$};
	\node[right= 1.2mm of {(5,0.5)}] {$p$};
	\node[right= 1.2mm of {(4,-2.5)}] {$p$};
	\node[mark size=4pt,color=black] at (0,0)
	{\pgfuseplotmark{*}};
	\node[mark size=4pt,color=black] at (1,3)
	{\pgfuseplotmark{*}};
	\node[mark size=4pt,color=black] at (4,-1)
	{\pgfuseplotmark{*}};
	\node[mark size=4pt,color=black] at (2.5,-3.5)
	{\pgfuseplotmark{*}};
	\node[mark size=4pt,color=black] at (1,1.5)
	{\pgfuseplotmark{*}};
	\node[mark size=4pt,color=black] at (2.5,4)
	{\pgfuseplotmark{*}};
	\node[mark size=4pt,color=black] at (5,0.5)
	{\pgfuseplotmark{*}};
	\node[mark size=4pt,color=black] at (4,-2.5)
	{\pgfuseplotmark{*}};
	\node[below left= 0mm of {(0.5,1.1)}] {$u_1$};
	\node[below right= 0.5mm of {(0.4,2.4)}] {$u_2$};
	\node[below left= 0.5mm of {(1.9,4)}] {$u_3$};
	\node[below left= 0.5mm of {(5.2,0)}] {$u_1$};
	\node[below right= 0.5mm of {(4,-1.5)}] {$u_2$};
	\node[below left= 0.5mm of {(3.7,-3)}] {$u_3$};
	\node[below left= 0.5mm of {(4.2,2.8)}] {$u_4$};
	\node[below left= 0.5mm of {(1.1,-1.3)}] {$u_4$};
		\end{scope}
	\end{tikzpicture} \caption{Degeneration to $\mathcal{H}(2)$}
	\end{figure}
	We see that, as the polygon degenerates, the points $p$ and $q$ get identified in the limit case. In particular, we can see from the picture that the limit is a translation surface parametrized by a point in $\mathcal{H}(2)$.
\end{ex} \par 
\subsection{Connected components of $\mathcal{H}(\mu)$}
We proceed to study the geometry of $\mathcal{H}(\mu)$. The next question we want to ask is if the space $\mathcal{H}(\mu)$ is connected, and if not: how many components can it have. This question is completely answered by two theorems in \cite{abkz03}. We will first state the result and then explain what it tells us. 
\begin{thm} The connected components of the stratum $\mathcal{H}(\mu)$ for genus $g \geq 4$ can be described as follows: \begin{bulist} 
	\item The stratum $\mathcal{H}(2g-2)$ has three connected components: the hyperelliptic component $\mathcal{H}^{hyp}(2g-2)$, the odd spin structure component $\mathcal{H}^{odd}(2g-2)$ and the even spin structure component $\mathcal{H}^{even}(2g-2)$; 
	\item If $g$ is odd, the stratum $\mathcal{H}(g-1,g-1)$ has three connected components as before: $\mathcal{H}^{hyp}(g-1,g-1)$, $\mathcal{H}^{odd}(g-1,g-1)$ and $\mathcal{H}^{even}(g-1,g-1)$;  
	\item All other strata of the form $\mathcal{H}(2l_1,...,2l_n)$ have only two connected components: $\mathcal{H}^{odd}(2l_1,...,2l_n)$ and $\mathcal{H}^{even}(2l_1,...,2l_n)$;  
	\item If $g$ is even then the stratum $\mathcal{H}(g-1,g-1)$ has two connected components: $\mathcal{H}^{hyp}(g-1,g-1)$ and $\mathcal{H}^{nonhyp}(g-1,g-1)$; 
	\item All other strata of holomorphic differentials $\mathcal{H}(\mu)$ are nonempty and connected. \end{bulist}
\end{thm}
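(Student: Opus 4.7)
The plan is to construct two locally constant invariants on $\mathcal{H}(\mu)$ that witness the listed components, and then to show by surgery arguments that any two translation surfaces agreeing on these invariants lie in a common component. First I would define the invariants. Call a component \emph{hyperelliptic} if every $(X,\omega)$ in it admits an involution $\sigma \colon X \to X$ with $\sigma^{*}\omega = -\omega$ and $X/\sigma \cong \mathbb{P}^{1}$. Since $\sigma$ must preserve the zero divisor $(\omega)_{0}$, a hyperelliptic locus can exhaust an entire component only in $\mathcal{H}(2g-2)$ (the unique zero being a Weierstrass point) or in $\mathcal{H}(g-1, g-1)$ (the two zeros being swapped by $\sigma$); in both cases the locus is closed, and a dimension count inside $\mathcal{M}_{g}$ confirms that it already has the expected dimension of a full component. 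Second, when all $m_{i} = 2l_{i}$ are even, the divisor $D = \sum l_{i} p_{i}$ satisfies $2D \sim K_{X}$ and is therefore a theta characteristic on $X$; by the theorem of Atiyah--Mumford the parity $h^{0}(X, \mathcal{O}_{X}(D)) \bmod 2$ is a deformation invariant, hence locally constant on the stratum.

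Next I would verify that the invariants realise the claimed values. Computing $h^{0}$ explicitly for a hyperelliptic model $y^{2} = f(x)$ with the prescribed zero pattern pins down whether the hyperelliptic locus sits inside $\mathcal{H}^{odd}$ or $\mathcal{H}^{even}$, which accounts for the three-versus-two dichotomy. To see that both spin parities actually occur (and similarly both hyperelliptic and nonhyperelliptic loci in case (iv)), I would construct explicit polygonal representatives in each putative component, typically by attaching handles of prescribed parity to a base surface. This already gives the lower bound on the number of components.

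The main step, and the principal obstacle, is the reverse direction: any two $(X_{1}, \omega_{1}), (X_{2}, \omega_{2}) \in \mathcal{H}(\mu)$ sharing all relevant invariants lie in the same component. I would follow the surgery strategy of Kontsevich--Zorich, built on three elementary operations on polygonal representations: \emph{breaking up a zero} of order $m_{i} + m_{j}$ into an adjacent pair along a short saddle connection, the inverse \emph{merging} of two zeros into one, and \emph{bubbling a handle} by attaching a parallelogram. These operations are continuous inside $\mathcal{H}(\mu)$ or between neighbouring strata, so using the merging proposition stated earlier one can reduce the connectedness problem to the minimal stratum $\mathcal{H}(2g-2)$, and then exhibit a finite list of model surfaces (one per expected component) to which every $(X,\omega)$ can be transformed by a chain of surgeries, the outcome being forced by the preserved invariants.

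The hard part is precisely this last reduction: one must track exactly how spin parity and hyperellipticity transform under each surgery, and verify that the combinatorial graph of model surfaces has the claimed number of connected pieces. Spin parity adds under connected sum of spin surfaces, so handle bubbling changes it in a controlled way; hyperellipticity, by contrast, is rigid and is preserved only under explicitly hyperelliptic surgeries. The hypothesis $g \geq 4$ and the split between cases (iii) and (iv) arise from low-genus combinatorial coincidences: they are exactly the thresholds beyond which the surgery graph stabilises, so the enumeration of components agrees with the enumeration of invariant values, and no further collapses occur.
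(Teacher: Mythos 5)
You should first note that the lecture notes do not prove this theorem at all: it is quoted from Kontsevich--Zorich \cite{abkz03}, and the surrounding text only introduces the two invariants you also use (hyperellipticity via the degree $2$ cover to $\mathbb{P}^1$, and the parity of the spin structure, computed flat-geometrically by the Arf invariant). Your forward direction is consistent with that discussion, with two caveats. To conclude that the hyperelliptic locus is a whole component you need, besides closedness and the dimension count, its connectedness (monodromy permuting the $2g+2$ branch points, so that the choice of Weierstrass point, resp.\ conjugate pair, does not disconnect it), and to place it relative to the spin components you need the explicit computation with $z^{g-1}dz/x$ as in the notes. More importantly, your explanation of the split between the third and fourth items is off: it is not a ``threshold where the surgery graph stabilises'' but simply that the divisor $D=\sum l_ip_i$ with $2D\sim K_X$ exists only when all zero orders are even, so for $\mathcal{H}(g-1,g-1)$ the spin invariant is defined only when $g$ is odd, while for $g$ even hyperellipticity is the only available invariant. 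The hypothesis $g\geq 4$ is what rules out the genuine low-genus coincidences (every genus $2$ curve is hyperelliptic; in genus $3$ the even-spin locus of $\mathcal{H}(4)$ coincides with the hyperelliptic component).

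The genuine gap is the converse, which is the actual content of the theorem: that two surfaces with the same invariants lie in one component, and that no further invariants exist. Your proposal defers this entirely to ``a finite list of model surfaces to which every $(X,\omega)$ can be transformed by a chain of surgeries,'' but none of the load-bearing steps is carried out: which model surfaces (Kontsevich--Zorich use explicit cylinder-type representatives), why every surface in the minimal stratum can be deformed to one of them, how the Arf invariant changes under bubbling a handle as a function of the attaching data (this is exactly what shows that only two spin values survive and that no further collapses occur), and how hyperellipticity is created or destroyed along the way. The reduction to $\mathcal{H}(2g-2)$ also needs more than the merging proposition quoted in the notes, which only asserts that each component of $\mathcal{H}(m_1,\ldots,m_n)$ contains in its closure some component of the smaller stratum; counting components of the larger stratum requires controlling, conversely, how breaking up a zero of a surface in a fixed component of the minimal stratum distributes among components above it. As written, your text is a correct road map of \cite{abkz03} rather than a proof: the hard ninety percent, the connectivity argument, is acknowledged but not supplied.
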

In the same article there is also a description of these strata in low genus: 
\begin{thm} When the genus is $2$ or $3$ we have:\begin{bulist}
	\item When $g=2$, we have only two strata of holomorphic differentials: $\mathcal{H}(1,1)$ and $\mathcal{H}(2)$. Both of them are connected and coincide with the hyperelliptic component. 
	\item When $g=3$, the strata $\mathcal{H}(2,2)$ and $\mathcal{H}(4)$ have two connected components: the hyperelliptic one and the odd spin structure one. All other strata of holomorphic differentials are nonempty and connected. \end{bulist}
\end{thm}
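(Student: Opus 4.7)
The plan is to combine two global invariants: the hyperelliptic locus $\mathcal{H}^{hyp}(\mu)$, which is closed in $\mathcal{H}(\mu)$, and — available precisely when every $m_i = 2\ell_i$ is even — the spin parity $\phi(X,\omega) := h^{0}(X, D) \bmod 2$, where $D := \sum \ell_i p_i$ is a theta characteristic since $2D = (\omega) = K_X$. By work of Atiyah and Mumford, $\phi$ is a topological, hence locally constant, invariant on such a stratum. The proof has three ingredients: count the pieces these invariants cut out; realize each by an explicit flat-polygon model to prove nonemptiness; and verify that each remaining piece is connected, typically by surgery in period coordinates, bootstrapping from the principal stratum via Proposition 20 (merging zeros).

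For $g=2$, every smooth curve of genus $2$ is hyperelliptic, so both $\mathcal{H}(2)$ and $\mathcal{H}(1,1)$ coincide by definition with their hyperelliptic components, and nonemptiness is witnessed by Examples 4 and 9. For connectedness, I would use the Weierstrass model $y^2 = f(x)$ with $\deg f \in \{5,6\}$ together with $\omega = (ax+b)\,dx/y$: the space of such pairs is irreducible, and the stratum conditions (the root of $ax+b$ coincides with a branch point of $f$, respectively, is disjoint from them) cut out irreducible, and therefore connected, locally closed subsets.

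For $g=3$, the odd-order strata $\mathcal{H}(3,1)$, $\mathcal{H}(2,1,1)$ and $\mathcal{H}(1,1,1,1)$ admit no spin invariant. A dimension count using Corollary 14 and $\dim \mathcal{M}_3^{hyp} = 5$ shows that the hyperelliptic sub-stratum has strictly smaller dimension than $\mathcal{H}(\mu)$, so it cannot fill a component. Proposition 20 reduces connectedness of these strata to connectedness of the principal stratum $\mathcal{H}(1,1,1,1)$, which I would verify by a direct surgery argument moving zeros along straight-line trajectories in the flat metric. For the even-order strata $\mathcal{H}(4)$ and $\mathcal{H}(2,2)$, the spin parity $\phi$ splits each into at most two pieces, both realized by explicit polygonal models. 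The hyperelliptic locus lies in the even-spin piece: for $\mathcal{H}(4)$ the unique zero is a Weierstrass point, and the Weierstrass gap sequence $1,3,5$ gives $h^{0}(2p) = 2$; for $\mathcal{H}(2,2)$ the two zeros are exchanged by the involution, so $D = p + \iota(p)$ is the $g^{1}_{2}$, with $h^{0} = 2$.

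The main obstacle — and what truly distinguishes $g=3$ from $g \geq 4$ — is proving the reverse inclusion $\mathcal{H}^{even}(\mu) \subseteq \mathcal{H}^{hyp}(\mu)$ in these two strata. I would attempt this via a controlled degeneration: push an even-spin differential to a boundary configuration (for instance by collapsing a suitable saddle connection) of topological type where the parity forces a hyperelliptic involution on the generic smoothing, then propagate that involution back to the original surface. Connectedness of the odd-spin locus in each case would then follow from the same surgery/merging strategy, the key subtlety being to execute the deformation while keeping $\phi$ constant so as not to leak across the parity partition.
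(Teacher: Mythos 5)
The paper itself does not prove this theorem: it quotes the Kontsevich--Zorich classification from \cite{abkz03}, and the only piece it establishes is the $\mathcal{H}(4)$ case, done in the solution of Exercise 2 via plane quartics. Judged on its own, your framework (hyperelliptic locus plus Arf/spin parity as locally constant invariants, genus $2$ handled by Weierstrass models $y^2=f(x)$, $\omega=(ax+b)dx/y$) is sound, and you correctly isolate the genus-$3$ peculiarity that even parity must force hyperellipticity. But there are two genuine gaps. First, your use of Proposition 20 is backwards: it says that every component of $\mathcal{H}(m_1,\dots,m_n)$ contains in its closure a component of the \emph{more degenerate} stratum $\mathcal{H}(m_1+m_2,m_3,\dots,m_n)$; it gives no implication from connectedness of the principal stratum $\mathcal{H}(1,1,1,1)$ to connectedness of $\mathcal{H}(2,1,1)$ or $\mathcal{H}(3,1)$. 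A path in $\mathcal{H}(1,1,1,1)$ joining smoothings of points lying in two different components of $\mathcal{H}(2,1,1)$ does not yield a path inside $\mathcal{H}(2,1,1)$, and connectedness simply does not descend this way: already in genus $3$ the principal stratum is connected while $\mathcal{H}(4)$ has two components (and for $g\geq 4$ it is connected while $\mathcal{H}(2g-2)$ has three). The usable direction is the opposite one --- every component is adjacent to the minimal stratum, and one then needs a local statement that breaking a zero into two is a connected operation, plus an argument deciding when breakings from distinct minimal components land in the same component; this is precisely the hard combinatorial core of Kontsevich--Zorich that your sketch bypasses.

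Second, the step you yourself flag as the main obstacle, $\mathcal{H}^{even}(\mu)\subseteq\mathcal{H}^{hyp}(\mu)$ for $\mu=(4)$ and $(2,2)$ in genus $3$, should not be attacked by degeneration (an involution on a boundary curve does not automatically propagate to nearby smooth translation surfaces); it has a two-line proof you are missing. The theta characteristic $D=2p$, respectively $D=p_1+p_2$, is \emph{effective}, so $h^0(X,D)\geq 1$; even parity then forces $h^0(X,D)\geq 2$; a degree-$2$ divisor with two sections is a $g^1_2$, so $X$ is hyperelliptic and the zero set is a Weierstrass point, respectively a conjugate pair, i.e. $(X,\omega)\in\mathcal{H}^{hyp}(\mu)$ (this also explains why the argument dies at $g\geq4$, where $D=(g-1)p$ with $h^0\geq2$ only yields a low-degree pencil, not a $g^1_2$). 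Finally, connectedness of the remaining odd pieces is asserted but never established: for $\mathcal{H}^{odd}(4)$ one needs the irreducibility of the incidence variety of plane quartics with a hyperflex (the paper's Exercise 2), and for $\mathcal{H}^{odd}(2,2)$ something like irreducibility of the family of quartics with a marked bitangent, i.e. transitivity of the monodromy on the $28$ odd theta characteristics. Without these ingredients your invariants only bound the number of components from below.
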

We can make the following remarks: \begin{abclist}
 \item The stratum $\mathcal{H}(\mu)$ has at most three connected components. 
 \item Additional components appear because of the hyperelliptic and spin structures. \end{abclist}

It is a natural continuation to explain the meaning of the hyperelliptic and spin structures. We start by providing the definition of a hyperelliptic Riemann surface (and further proceed to explain what we mean when we say that the one-form $\omega$ on $X$ is hyperelliptic). \\

\begin{defn} A Riemann surface $X$ of genus $g$ is called hyperelliptic  if there exists a branched holomorphic cover $X \rightarrow \mathbb{P}^1$ of degree $2$. 
\end{defn}
\begin{remark}
	Using the Riemann-Hurwitz formula, in our particular case: 
	\[ 2g-2 = 2(2\cdot 0 - 2) + \sum_{p \in X}(e_p-1). \]
	Since the ramification index can take only the values $1$ and $2$, we deduce that the number of branch points on $\mathbb{P}^1$ is $2g-2+4 = 2g+2$. The double points of $X$ are called Weierstrass points. \par
	We see that a genus $g$ hyperelliptic curve $X$ has $2g+2$ Weierstrass points. 
\end{remark} \par 
Let $Hyp_g \subseteq \mathcal{M}_g$ be the sublocus parametrizing hyperelliptic Riemann surfaces. Using the fact that the monodromy around the branch points in $\mathbb{P}^1$ uniquely determines the Riemann surface $X$, and that the monodromy around every branch point must be a transposition in $S_2$, it follows that the branch points uniquely determine $X$. As a consequence we have that: 
\[ dim_\mathbb{C}Hyp_g = 2g+2-3 = 2g-1. \]
The drop in dimension by 3 is due to the fact that we need to factor out the automorphisms of $\mathbb{P}^1$. \par
As a consequence, it follows that $Hyp_g$ can be seen as an open subset of $Sym^{2g-1}\mathbb{P}^1$. In particular $Hyp_g$ is smooth, connected of dimension $2g-1$. \\ 

One of the advantages of working with hyperelliptic curves is that we have good explicit descriptions of them in terms of equations. We have the following description of a hyperelliptic curve and of its space of global holomorphic one-forms: 
\begin{prop}
	Let $X$ be a hyperelliptic Riemann surface and $a_1,...,a_{2g+2}$ its associated branch points in $\mathbb{P}^1$ under affine coordinates. Then $X$ can be realized as the zero locus: 
	\[x^2 = (z-a_1)...(z-a_{2g+2}) \text{ \ in \ } \mathbb{C}^2  \]
	completed to a Riemann surface. Then $\frac{dz}{x}, \frac{zdz}{x},...,\frac{z^{g-1}dz}{x}$ defined on this open subset in terms of $x$ and $z$, after extending to the entire $X$, are global holomorphic one-forms and they form a basis of $H^0(X,\Omega_X)$. \\
\end{prop}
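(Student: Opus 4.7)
The plan is to proceed in two stages: first realize $X$ as (the smooth completion of) the affine curve $C_0: x^2 = \prod_{i=1}^{2g+2}(z-a_i)$, and then exhibit the $g$ forms $z^i dz/x$ as a basis of holomorphic differentials.

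For the first stage, I would verify that $C_0 \subset \mathbb{C}^2$ is smooth: at a point where $x \neq 0$, the map $(x,z) \mapsto z$ gives a local coordinate, while at a point $(0,a_j)$, the partial derivative of $x^2 - \prod(z-a_i)$ in $z$ is $-\prod_{i \neq j}(a_j - a_i) \neq 0$ since the branch points are distinct, so $x$ is a local coordinate. Projection to $z$ exhibits $C_0$ as a double cover of $\mathbb{A}^1$ branched exactly over the $a_i$. To compactify, I would cover $\mathbb{P}^1$ by the charts $z$ and $w = 1/z$ and add the preimage of $\infty$. Since $\deg f = 2g+2$ is even, $\infty$ is not a branch point: the change of variable $\tilde{x} = x/z^{g+1}$, $w = 1/z$ yields $\tilde{x}^2 = w^{2g+2}f(1/w) = \prod(1 - a_i w)$, whose right-hand side is a unit at $w=0$, giving two smooth points $\infty_\pm$ over $\infty$. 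The resulting smooth projective curve $\overline{C}_0$ is a degree-$2$ cover of $\mathbb{P}^1$ branched over $a_1, \dots, a_{2g+2}$; by Riemann--Hurwitz its genus is $g$. Since the previous paragraph observed that such a cover is uniquely determined by its branch locus (via the monodromy representation into $S_2$), we conclude $\overline{C}_0 \cong X$.

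For the second stage, I would check that $\omega_i := z^i dz/x$ extends to a holomorphic one-form on $X$ for each $0 \le i \le g-1$. Away from the branch points and from infinity, $x$ is a nonvanishing holomorphic function and $dz$ is a holomorphic differential, so $\omega_i$ is visibly holomorphic. At a branch point, with $x$ as local coordinate, differentiating $x^2 = \prod(z-a_j)$ gives $2x\,dx = f'(z)\,dz$, hence $dz/x = 2\,dx/f'(z)$; since $f'(a_j) \neq 0$, this is holomorphic, and multiplication by $z^i$ preserves holomorphy. At a point $\infty_\pm$ I use the coordinate $w = 1/z$: computing in the chart with $\tilde{x} = x/z^{g+1}$, a short calculation gives
\[
\frac{z^i\,dz}{x} \;=\; -\,\frac{w^{g-1-i}\,dw}{\tilde{x}},
\]
which is holomorphic at $w=0$ because $\tilde{x}(\infty_\pm)=\pm 1$ is nonzero and $g-1-i \geq 0$.

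Finally, I would argue linear independence and completeness together. The formula above shows that $\omega_i$ vanishes to order exactly $g-1-i$ at $\infty_+$, so the $\omega_i$ have pairwise distinct vanishing orders and are therefore linearly independent in $H^0(X,\Omega_X)$. Since $\dim_{\mathbb{C}} H^0(X,\Omega_X) = g$ for any genus-$g$ Riemann surface, these $g$ forms constitute a basis. The main obstacle I anticipate is purely bookkeeping: choosing the right local coordinate at the branch points (where $x$, not $z$, is uniformizing) and at infinity (where $w=1/z$ and the rescaled $\tilde{x}$ are needed), and being careful that $2g+2$ is even so that $\infty$ splits into two unramified points rather than becoming an additional branch point.
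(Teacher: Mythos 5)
Your proof is correct and follows essentially the same route as the paper's solution to Exercise 12: the same compactification via the chart $w=1/z$, $\tilde{x}=x/z^{g+1}$, and the same local-coordinate checks at the branch points (where $x$ is the uniformizer and $2x\,dx=f'(z)\,dz$) and at the two points over infinity. You additionally spell out the identification of the completed curve with $X$ via the monodromy description and the linear-independence argument from the distinct vanishing orders at $\infty_+$, details the paper leaves implicit.
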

We proceed to define what it means for a one-form to be hyperelliptic. But first, we want to digress and provide some references for what we did up to this point. 
\begin{remark}
	The description of a hyperelliptic curve and of its holomorphic one-forms, together with a discussion on the equivalence between monodromy and branch covers can be found in \cite{abmir95}. Another perspective on smoothness and dimension of the hyperelliptic locus can be found in \cite{abACG11}, Chapter 11, Lemma 6.15. \\
\end{remark} 
By applying an automorphism of $\mathbb{P}^1$, we can assume that $a_1 = 0$. Then near the point $(x,z) = (0,0)$ in $X$ we have: 
\[ x^2 = z\cdot(\text{polynomial nonvanishing at $z=0$}). \]
Denote by $q(z)$ this polynomial. By taking the holomorphic one-form $\frac{z^{g-1}dz}{x}$ and expressing it in terms of the local coordinate $x$ at 0 we have: 
\[\frac{z^{g-1}dz}{x} = \frac{x^{2g-1}dx}{x\cdot q(z)^{g-1}\cdot (q(z)+zq'(z))}.\] \\
In particular it follows that this holomorphic one-form has a unique zero of order $2g-2$ at $(0,0)$. In summary, there exists such a one-form for every Weierstrass point in $X$ (unique up to multiplication with a constant in $\mathbb{C}^{*}$). Such one-forms in the stratum $\mathcal{H}(2g-2)$ are called hyperelliptic. Similarly we can take a one-form of the type: $\frac{(z-a)^{g-1}dz}{x}$ for $a$ different from the $a_i$'s. This one-form has two zeroes, both of order $g-1$ and the two zeroes lie in the same fiber over $\mathbb{P}^1$, called conjugate points. The one-forms with two zeroes of order $g-1$ at two conjugate points are also called hyperelliptic in the stratum $\mathcal{H}(g-1,g-1)$. \\ 

Notice that beside the hyperelliptic one-forms we described, there is one that is hidden, namely $\frac{dz}{x}$. Its zeroes are the points at infinity which can be seen only when we complete the curve in $\mathbb{C}^2$ to a Riemann surface. \\
 
The hyperelliptic component $\mathcal{H}^{hyp}(2g-2) \subseteq \mathcal{H}(2g-2)$ is the subspace consisting of points $(X,\omega)$ where $X$ and $\omega$ are both hyperelliptic. \\

The dimension of both spaces is $2g$. The dimension on the left hand side is $2g$ because $\dim_\mathbb{C}Hyp_g = 2g-1$ and an additional $1$-dimension comes from the scaling factor of a one-form.

\par
The component $\mathcal{H}^{hyp}(g-1,g-1)$ of $\mathcal{H}(g-1,g-1)$ can be defined in a similar manner, with the obvious modifications. \\ 

Our next goal is to understand the spin structure. We start by providing the definition. 
\begin{defn}
	Take $\mu = (2k_1,...,2k_n)$ a positive partition of $2g-2$ with all its entries even numbers. Given any $(X,\omega)$ in $\mathcal{H}(\mu)$, denote by $p_i$ the zero of order $2k_i$ for $i=\overline{1,n}$.  
    Then $\dim_\mathbb{C}H^0(X,k_1p_1+...+k_np_n) \pmod{2}$ is called the parity of the spin/theta characteristic of $\omega$. \\
	
	Here we denoted $H^0(X,k_1p_1+...+k_np_n)$ to be the space of meromorphic functions $f$ on $X$ having the property that: 
	\[ div(f)+ \sum_{i=1}^nk_ip_i \geq 0. \] 
\end{defn}
Let us now explain what is so special about this parity. It is long time known that this value is constant on connected families of pointed curves. A proof of this result can be found in the papers \cite{abAti71} and \cite{abmum71}. We will state here the version of the theorem appearing in \cite{abAti71}, with the observation that it also has an algebraic counterpart.  \\
\begin{thm}(modulo $2$ stability) 
	Let $X_t$ be a holomorphic family of compact Riemann surfaces where $t$ is varying over the unit disk in $\mathbb{C}$, denoted $\Delta$. Consider $D_t$ a holomorphic family of divisors on the family $X_t$ such that:
	\[2D_t = K_t \text{ \ \ } \forall t \in \Delta, \] 
	where $K_t$ is the canonical divisor class of $X_t$. \par 
	Then $\dim_\mathbb{C}H^0(X_t,D_t)$ modulo $2$ does not depend on the parameter $t$. \\
\end{thm}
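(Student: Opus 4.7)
The plan is to produce a local matrix description of the cohomology of $D_t$ over $\Delta$, then exploit the self-duality coming from Serre duality and the condition $K_t = 2D_t$ to reduce the problem to a parity statement about corank.

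First, I compute that $\chi(X_t, D_t) = 0$ for all $t$. Since $\deg D_t = \deg K_t / 2 = g - 1$, Riemann--Roch gives $\chi(X_t, D_t) = (g - 1) - g + 1 = 0$. Combined with Serre duality $H^1(X_t, D_t) \cong H^0(X_t, K_t - D_t)^\ast = H^0(X_t, D_t)^\ast$ (using $K_t - D_t = D_t$), this also yields $h^0(X_t, D_t) = h^1(X_t, D_t)$ pointwise on $\Delta$.

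Next, let $\pi \colon \mathcal{X} \to \Delta$ be the total family and $\mathcal{L} = \mathcal{O}_{\mathcal{X}}(\mathcal{D})$. By Grauert's theorem and cohomology and base change, after possibly shrinking $\Delta$, I may represent $R\pi_\ast \mathcal{L}$ by a two-term complex of free $\mathcal{O}_\Delta$-modules $\Phi \colon \mathcal{O}_\Delta^n \to \mathcal{O}_\Delta^n$; equality of source and target rank is forced by $\chi = 0$. Fiberwise, $\ker \Phi(t) = H^0(X_t, D_t)$ and $\mathrm{coker}\, \Phi(t) = H^1(X_t, D_t)$, so it suffices to prove that $\dim \ker \Phi(t) \pmod 2$ is locally constant. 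Relative Serre duality, together with $K_t - D_t = D_t$, equips $R\pi_\ast \mathcal{L}$ with a canonical self-duality $R\pi_\ast \mathcal{L} \simeq (R\pi_\ast \mathcal{L})^\vee[-1]$, and transporting this structure through the resolution permits me to choose $\Phi$ to be symmetric with respect to an identification of $\mathcal{O}_\Delta^n$ with its dual.

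The main obstacle is the parity step itself: it is not true in general that the parity of the corank of a holomorphic family of symmetric matrices is locally constant---for instance, $\Phi(t) = \mathrm{diag}(t, 1)$ has a kernel-jump of one at $t = 0$. Extra geometric input is needed. The cleanest route is topological: one identifies $h^0(X_t, D_t) \pmod 2$ with the Arf invariant of the quadratic refinement of the mod-$2$ intersection form on $H_1(X_t, \mathbb{Z}/2)$ determined by the spin structure $D_t$; since $\Delta$ is contractible and spin structures are discrete, this Arf invariant does not depend on $t$. A purely algebraic alternative, following Mumford \cite{abmum71} and Atiyah \cite{abAti71}, combines the symmetry of $\Phi$ with a careful deformation-theoretic analysis to show that any jump in the corank of $\Phi(t)$ must be even. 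Either argument concludes that $\dim_{\mathbb{C}} H^0(X_t, D_t) \pmod 2$ is constant on $\Delta$.
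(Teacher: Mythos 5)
Your preliminary reductions are correct: $\deg D_t = g-1$ gives $\chi(X_t,D_t)=0$ by Riemann--Roch, Serre duality with $K_t-D_t=D_t$ gives $h^0=h^1$, and representing $R\pi_*\mathcal{O}_{\mathcal{X}}(\mathcal{D})$ by a two-term complex $\Phi\colon\mathcal{O}_\Delta^n\to\mathcal{O}_\Delta^n$ with fiberwise kernel $H^0(X_t,D_t)$ is the standard Grauert/base-change setup. But the proof stops exactly where the theorem begins. You yourself point out (with the example $\mathrm{diag}(t,1)$, or simply the $1\times 1$ matrix $(t)$) that symmetry of $\Phi$ does not constrain the parity of the corank, and the two escape routes you then offer are not arguments. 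Route (i), identifying $h^0(X_t,D_t)\bmod 2$ with the Arf invariant of a quadratic refinement attached to the spin structure, is itself the hard theorem of Atiyah, Mumford and Johnson (the paper quotes Johnson's version later, in \cite{abjoh80}); its known proofs either presuppose the mod-$2$ stability you are trying to establish or require precisely the analysis you are omitting, so invoking it here is circular in spirit. Route (ii), ``a careful deformation-theoretic analysis following Mumford and Atiyah,'' is a citation of \cite{abmum71} and \cite{abAti71}, not a proof. Note also that even the intermediate claim that $\Phi$ can be chosen \emph{strictly} symmetric (transporting relative Serre duality through a particular free resolution) needs justification and is not automatic.

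What would actually close the gap, along Mumford's algebraic route, is the following extra structure, which your symmetric-matrix formulation loses: choose an auxiliary effective divisor $E$ of large degree, constant over a shrunken $\Delta$, and consider the fixed-dimensional spaces $V_t=H^0\bigl(X_t,\,\mathcal{O}(D_t+E)/\mathcal{O}(D_t-E)\bigr)$ equipped with the nondegenerate quadratic form $s\mapsto\sum\mathrm{Res}(s^2)$, which makes sense precisely because $2D_t=K_t$. The images of $H^0(X_t,D_t+E)$ and of the sections supported along $E$ are two maximal isotropic subspaces varying holomorphically with $t$, and their intersection is $H^0(X_t,D_t)$. The theorem then follows from the linear-algebra lemma that for two maximal isotropic subspaces $A,B$ of a fixed even-dimensional quadratic space, $\dim(A\cap B)\bmod 2$ only depends on which components of the orthogonal Grassmannian $A$ and $B$ lie in, hence is locally constant in holomorphic families. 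Without this (or Atiyah's mod-$2$ index argument), your write-up establishes only the easy bookkeeping. For calibration: the paper itself does not prove this statement either; it explicitly defers to \cite{abAti71} and \cite{abmum71}, so there is no argument in the text against which your approach could be traded off --- the burden of the parity step is entirely on you, and it is not met.
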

We will not focus much on this theorem. Our interest is simply the following slogan we derive from it: $\mathcal{H}(2k_1,...,2k_n)$ is split into two connected components by the modulo 2 stability. \\ 

The modulo 2 stability is suitable when working with pointed Riemann surfaces, but it is not what we are looking for when we are working with translation surfaces seen geometrically. For this, we will provide an equivalent definition of modulo 2 stability that is more accessible from this perspective. \\ 
 
Let us take a symplectic basis $\langle a_i,b_i\rangle_{i=1}^g$ of $H_1(X;\mathbb{Z})$ having the following properties with regard to intersection: \par 
\[a_i \cdot b_j =
\left\{
\begin{array}{ll}
0  & \mbox{if } i \neq j \\
1 & \mbox{if } i = j
\end{array}
\right.
\]
\[ a_i \cdot a_j = 0 \text{ \ \ } \forall i,j \in \left\{1,...,g \right\}  \]
\[ b_i \cdot b_j = 0 \text{ \ \ } \forall i,j \in \left\{1,...,g \right\}  \]
We make the observation that in the second and third equation, $i$ and $j$ do not have to be distinct. \par 
\begin{defn}
	Under the flat metric of $\omega$, for a smooth loop $\gamma \subseteq X$ that does not pass through the zeroes of $\omega$ we define
	\[ Ind_\gamma(\omega) = \text{ \ degree of the Gauss map associated to  } \gamma . \] 
\end{defn}
\begin{ex}
	Let $p$ be a zero of order $2$ and $\gamma$ a smooth curve around $p$ as in the picture: \\ 
	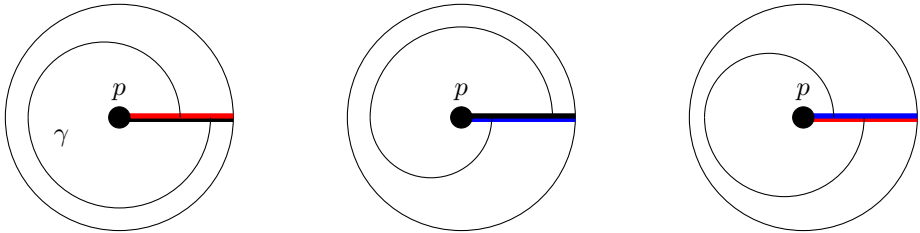
\begin{figure}[H]\centering
	\begin{tikzpicture}[baseline=(current bounding box.north)]
	
	\begin{scope}
	
	\draw (1.5,0) arc(0:360:1.5cm );
	\draw[color = black,line width = 0.8mm] (0,-0.02) -- (1.5,-0.02);
	\draw[color = red, line width = 0.7mm] (0,0.02) -- (1.5,0.02
	);
	\node[mark size=4pt,color=black] at (0,0) {\pgfuseplotmark{*}};
	\draw (0.8,0) arc(0:180:1cm );
	\draw (-1.2,0) arc(180:360:1.2cm );
	\end{scope}
	%
	\node[below left= 0.5mm of {(-0.5,0)}] {$\gamma$};
	\node[above= 1mm of {(0,0)}] {$p$};
	\node[above= 1mm of {(4.5,0)}] {$p$};
	\node[above= 1mm of {(9,0)}] {$p$};
	
	\begin{scope}
	
	\draw (6,0) arc(0:360:1.5cm );
	\draw[color = blue, line width = 0.8mm] (4.5,-0.02) -- (6,-0.02);
	\draw[color = black, line width = 0.7mm] (4.5,0.02) -- (6,0.02);
	\node[mark size=4pt,color=black] at (4.5,0) {\pgfuseplotmark{*}};
	\draw (5.7,0) arc(0:180:1.2cm );
	\draw (3.3,0) arc(180:360:0.8cm );
	\end{scope}
	
	\begin{scope}
	
	\draw (10.5,0) arc(0:360:1.5cm );
	\draw[color = red, line width = 0.8mm] (9,-0.02) -- (10.5,-0.02);
	\draw[color = blue, line width = 0.7mm] (9,0.02) -- (10.5,0.02);
	\node[mark size=4pt,color=black] at (9,0)
	{\pgfuseplotmark{*}};
	
	\draw (9.4,0) arc(0:180:0.85cm );
	\draw (7.7,0) arc(180:360:1.05cm );
	\end{scope}
	\end{tikzpicture} \caption{Smooth curve around a zero of order 2}
	\end{figure}
	A neighbourhood of the point $p$ looks like the above picture. The circles are glued along the slits, respecting the colour. Taking a curve $\gamma$ around the point $p$ as in the figure, we see that its Gauss map circles $S^1$ for three times and hence we have: 
	\[ Ind_\gamma(\omega) = 3. \]
\end{ex}
In this case, when $\omega$ has only even zeroes, we define the $Arf$ invariant.  
\begin{defn}
	Let $(X,\omega) \in \mathcal{H}(2k_1,...,2k_n)$. Then we define the $Arf$ invariant to be:
	\[ Arf(\omega) = \sum_{i=1}^g(Ind_{a_i}(\omega) + 1)(Ind_{b_i}(\omega) + 1) \pmod{2}.\]
\end{defn} \par
This is well-defined, independent of a choice of a basis respecting the intersection numbers we considered, as going across any of the even zero points of $\omega$ changes the index of $\gamma$ by an even number, thus preserving the parity. \\ 

In fact, we have even more. It was proven in \cite{abjoh80} that this invariant is actually equal to the parity of the spin structure. 
\begin{thm}
	In the above notations, we have: 
	\[ Arf(\omega) = \dim_\mathbb{C}H^0(X,k_1p_1+...+k_np_n) \pmod{2}. \] 
\end{thm}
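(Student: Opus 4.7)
The plan is to attach to $\omega$ a $\mathbb{Z}/2$-valued quadratic refinement $q$ of the mod-$2$ intersection pairing on $H_1(X;\mathbb{Z}/2)$, show that $\mathrm{Arf}(\omega)$ as defined via Gauss-map indices agrees with the Arf invariant of $q$ in the sense of quadratic forms, and then invoke the classical theorem of Atiyah--Mumford identifying the Arf invariant of the quadratic form attached to a theta characteristic with the parity of its space of global sections. Observe that $D := k_1 p_1 + \ldots + k_n p_n$ satisfies $2D = (\omega) = K_X$, so $D$ is a theta characteristic, and $\dim_{\mathbb{C}} H^0(X,D)$ is exactly the section count we wish to relate to $\mathrm{Arf}(\omega)$.

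Given a smoothly embedded oriented loop $\gamma \subset X$ missing the zeros of $\omega$, I would set $q([\gamma]) := \mathrm{Ind}_\gamma(\omega) + 1 \pmod 2$ and first verify that this function descends to $H_1(X;\mathbb{Z}/2)$. Isotopies of $\gamma$ in the complement of the zeros preserve $\mathrm{Ind}_\gamma(\omega)$, and pushing $\gamma$ across a zero of order $2k_i$ changes the Gauss-map degree by $2k_i$, which is $0$ mod $2$; this was already noted in the text in the course of checking that the Arf invariant is independent of the chosen symplectic basis. For the remaining relations in homology, the standard approach is to express a null-homologous cycle as the boundary of a subsurface and apply a Poincaré--Hopf argument to the horizontal foliation of $\omega$ on that subsurface, using that every interior singularity contributes an even index since all zero orders $2k_i$ are even.

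Next I would establish the quadratic refinement identity
\[
q(\alpha + \beta) = q(\alpha) + q(\beta) + \alpha \cdot \beta \pmod 2
\]
by representing $\alpha$ and $\beta$ by transverse embedded loops and resolving each transverse intersection; a local computation shows that each resolution introduces a half-twist into the Gauss map, which summed over intersections contributes exactly $\alpha \cdot \beta \pmod 2$. Granting this, the standard symplectic formula for the Arf invariant of a quadratic refinement relative to the basis $\{a_i,b_i\}$ yields
\[
\mathrm{Arf}(q) = \sum_{i=1}^{g} q(a_i)\, q(b_i) = \sum_{i=1}^{g} (\mathrm{Ind}_{a_i}(\omega)+1)(\mathrm{Ind}_{b_i}(\omega)+1) = \mathrm{Arf}(\omega).
\]

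The final and most delicate step, which I expect to be the main obstacle, is to identify $q$ with the quadratic form $q_D$ attached algebraically to the spin structure $D$. By construction $q_D([\gamma])$ records the obstruction to extending a trivialisation of the square root of $K_X$ along $\gamma$, while $q([\gamma])$ records the framing of $\gamma$ coming from the horizontal direction of $\omega$; a careful comparison shows that these two framings on $X \setminus \{p_1,\ldots,p_n\}$ differ precisely by the Gauss-map winding number modulo $2$, so $q = q_D$. This comparison of an algebraically defined spin structure with a flat-geometric framing is exactly the content of Johnson's theorem \cite{abjoh80}. Combined with the Atiyah--Mumford equality $\mathrm{Arf}(q_D) \equiv \dim_{\mathbb{C}} H^0(X,D) \pmod 2$, this establishes the desired congruence.
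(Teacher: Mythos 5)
Your outline is sound and it follows the standard route in the literature, but you should be aware that the notes do not prove this theorem at all: it is stated as a direct citation to Johnson \cite{abjoh80}, with the only verification offered being the basis-independence remark (crossing an even-order zero changes the winding number by an even amount) and the genus-one example. Your proposal usefully expands this into the expected architecture: (i) the winding-number function $q(\gamma)=\mathrm{Ind}_\gamma(\omega)+1 \pmod 2$ descends to $H_1(X;\mathbb{Z}/2)$ because all zero orders are even, (ii) $q$ is a quadratic refinement of the intersection pairing, (iii) the symplectic formula $\mathrm{Arf}(q)=\sum_i q(a_i)q(b_i)$ recovers the $\mathrm{Arf}(\omega)$ of the notes, and (iv) the algebraic parity $\dim_{\mathbb{C}}H^0(X,D)\pmod 2$, $2D=K_X$, equals $\mathrm{Arf}$ of the theta-characteristic form by Atiyah--Mumford \cite{abAti71}, \cite{abmum71}. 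However, the decisive step in your plan, the identification $q=q_D$ of the flat-geometric winding-number form with the form attached to the spin structure $D=k_1p_1+\cdots+k_np_n$, is exactly what you defer to Johnson, i.e.\ to the same reference the notes cite for the whole theorem; so what you have is a well-organized reduction to the cited results rather than an independent proof, and the local claims in (i)--(ii) (the Poincar\'e--Hopf argument on a bounding subsurface, the half-twist count at resolved intersections) would still need to be carried out to make even that reduction complete. One further point worth flagging: the Atiyah--Mumford input you need is the identification of the parity with the Arf invariant of $q_D$, which is stronger than the mod-$2$ stability statement quoted earlier in these notes; both are in the cited papers, but the deformation-invariance alone, as stated here, does not by itself yield your step (iv) without an additional monodromy/classification argument and a check on model examples.
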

\begin{ex}
	The most illustrative example we can take is that of $\mu = (2g-2)$ for $g\geq 4$, where we see both the hyperelliptic and spin structures come into play. We are now in a position to describe the points in each of the components. \begin{bulist} 
	\item $\mathcal{H}^{hyp}(2g-2) = \left\{(X,\omega) \mid X \text{ \ is hyperelliptic } \right\}$ 
	\item $\mathcal{H}^{even}(2g-2) = \left\{(X,\omega) \mid X \text{ \ is nonhyperelliptic and $\omega$ has even spin} \right\}$ 
	\item $\mathcal{H}^{odd}(2g-2) = \left\{(X,\omega) \mid X \text{ \ is nonhyperelliptic and $\omega$ has odd spin  } \right\}$ \end{bulist}
\end{ex}
\begin{remark}
	In the case $\mu = (g-1,g-1)$ and $g$ odd, it is not enough for the curve $X$ to be hyperelliptic to conclude that $(X,\omega) \in \mathcal{H}^{hyp}(g-1,g-1)$. \par 
	In fact, if $X$ is hyperelliptic and $\omega$ is a one-form with two zeroes of multiplicity $g-1$ at two Weierstrass points, then $(X,\omega)$ is contained in the odd spin component. \\
\end{remark}
\begin{remark}
	By the description of period coordinates, all the components of $\mathcal{H}(\mu)$ for any positive partition of $2g-2$ are smooth. In particular, it follows that all the connected components are irreducible. \\
\end{remark}
\begin{ex}  We take $g=1$. On a flat torus $X$ we have that 
	\[H^0(X,\mathcal{O}_X) = \left\{ \text{  constant functions \ } \right\} \cong \mathbb{C} \]
	\[\Rightarrow \dim_\mathbb{C}H^0(X,\mathcal{O}_X) = 1. \]
	Let us prove that this is equal to the $Arf$ invariant. To see this, consider the standard polygonal representation of the flat torus as in the figure. \\
	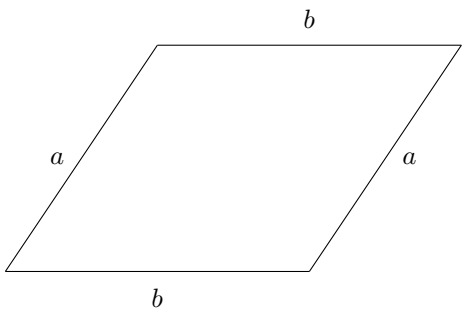
\begin{figure}[H] \centering
	\begin{tikzpicture}
	\begin{scope}
	\draw (0,0) -- (2,3);
	\draw(2,3) -- (6,3);
	\draw (0,0) -- (4,0);
	\draw (4,0) -- (6,3);
	\node[left = 1mm of {(1,1.5)}] {$a$};
	\node[right = 1mm of {(5,1.5)}] {$a$};
	\node[above= 1mm of {(4,3)}] {$b$};
	\node[below = 1mm of {(2,0)}] {$b$};
	\end{scope}
	\end{tikzpicture} \caption{Polygonal representation of flat torus}
	\end{figure}
	We can choose $a$ and $b$ as a symplectic basis satisfying the necessary relations. Since the normal vectors of $a$ and $b$ do not vary we conclude that 
	\[ Ind_a(\omega) = Ind_b(\omega) = 0 \]
	\[ \Rightarrow Arf(\omega) = (0+1)(0+1) = 1. \]
	It follows that for this example, the $Arf$ invariant and the parity of the spin structure coincide.
\end{ex}
\section{Teichm\"uller Dynamics} 
Take an element $(X,\omega)$ of $\mathcal{H}(\mu)$ and look at the polygonal representation of this translation surface. As it is seen in $\mathbb{R}^2$, we can act on it by the action of $GL_2^{+}(\mathbb{R})$ on $\mathbb{R}^2$. Of course, the action sends parallel edges to parallel edges and the glueing data stay the same. As a consequence, it follows that by acting on $(X,\omega)$ by a matrix in $GL_2^{+}(\mathbb{R})$ we get another translation surface in the same stratum $\mathcal{H}(\mu)$. \\ 

We get an action of $GL_2^{+}(\mathbb{R})$ on $\mathcal{H}(\mu)$. 
\begin{figure}[H] \centering

\begin{tikzpicture}
\begin{scope}

\draw (0,0) -- (1,1.5);
\draw (1,1.5) -- (1,3);
\draw (1,3) -- (2.5,3);
\draw (2.5,3) -- (5,0.5);
\draw (5,0.5) -- (4,-1);
\draw (4,-1) -- (4,-2.5);
\draw (4,-2.5) -- (2.5,-2.5);
\draw (2.5,-2.5) -- (0,0);

\draw (8,-2.5) -- (9,0);
\draw (9,0) -- (9,1.5);
\draw (9,1.5) -- (10.5,3);
\draw (10.5,3) -- (13,3);
\draw (13,3) -- (12,0.5);
\draw (12,0.5) -- (12,-1);
\draw (12,-1) -- (10.5,-2.5);
\draw (10.5,-2.5) -- (8,-2.5);
\draw [decoration={markings,mark=at position 1 with
	{\arrow[scale=3,>=stealth]{>}}},postaction={decorate}] (6,0) -- (7.5,0);
\node[right = 1mm of {(5.5,0.7)}] {$A = \begin{bmatrix} * & * \\ * & * \end{bmatrix}$};
\node[left = 1mm of {(0.5,0.8)}] {$v_1$};
\node[right = 1mm of {(7.7,-1.1)}] {$u_1$};
\node[right = 1mm of {(0.3,2.1)}] {$v_2$};
\node[right = 1mm of {(8.3,0.5)}] {$u_2$};
\node[left = 1mm of {(2,3.2)}] {$v_3$};
\node[right = 1mm of {(9.1,2.4)}] {$u_3$};
\node[right = 1mm of {(3.7,2)}] {$v_4$};
\node[right = 1mm of {(11.3,3.2)}] {$u_4$};
\node[left = 1mm of {(5.2,-0.3)}] {$v_1$};
\node[right = 1mm of {(12.4,1.6)}] {$u_1$};
\node[right = 1mm of {(4,-1.7)}] {$v_2$};
\node[right = 1mm of {(12,-0.3)}] {$u_2$};
\node[left = 1mm of {(3.6,-2.8)}] {$v_3$};
\node[right = 1mm of {(11.1,-1.9)}] {$u_3$};
\node[right = 1mm of {(0.7,-1.5)}] {$v_4$};
\node[right = 1mm of {(9,-2.7)}] {$u_4$};
\end{scope} 
\end{tikzpicture} \caption{Action of a matrix $A$ on a translation surface}
\end{figure}
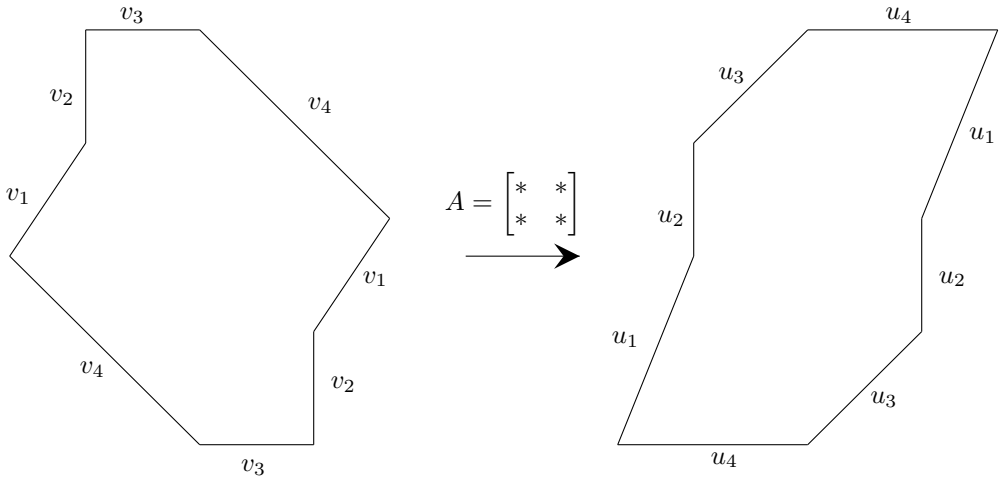
\par 
Here we have 
\[ u_i = Av_i \text{ \ } \forall i  \]
\\
Take a translation surface $(X,\omega) \in \mathcal{H}(\mu)$, given by a polygon and look at its orbit with regard to $GL_2^{+}(\mathbb{R})$ in the stratum. We would be interested in describing this orbit. The next theorem is a consequence of results in \cite{abmas82} and \cite{abvee82}, addressing the question of the closure of such an orbit under the analytic topology of period coordinates. However, the techniques used for proving this result are not algebraic. \\
\begin{thm}(Masur, Veech, '80s) 
	Take $(X,\omega)$ a generic element of $\mathcal{H}(\mu)$. Then the closure of its orbit with regard to $GL_2^{+}(\mathbb{R})$ is a connected component of the stratum $\mathcal{H}(\mu)$.
\end{thm}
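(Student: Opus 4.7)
The plan is to reduce the statement to the measure-theoretic claim that $SL_2(\mathbb{R})$ acts ergodically on each connected component of the area-one locus $\mathcal{H}_1(\mu) \subseteq \mathcal{H}(\mu)$, and then upgrade ergodicity to topological density. First I would normalize by the $\mathbb{R}_{>0}$-scaling inside $GL_2^{+}(\mathbb{R})$, which multiplies the area by a positive scalar, to restrict attention to surfaces of area one. The period coordinates $\bigl(\int_{\gamma_1}\omega,\ldots,\int_{\gamma_{2g+n-1}}\omega\bigr) \in \mathbb{C}^{2g+n-1}$ pull back Lebesgue measure on $\mathbb{C}^{2g+n-1}$; disintegrating along the area functional produces the Masur--Veech measure $\nu$ on $\mathcal{H}_1(\mu)$. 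Because $GL_2^{+}(\mathbb{R})$ acts $\mathbb{R}$-linearly in period coordinates, $\nu$ is manifestly invariant under the induced $SL_2(\mathbb{R})$-action.

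The two key analytic inputs I would need are: \textbf{(a)} the Masur--Veech finiteness statement, asserting $\nu(\mathcal{H}_1(\mu)) < \infty$, and \textbf{(b)} ergodicity of the Teichm\"uller geodesic flow $g_t = \mathrm{diag}(e^t, e^{-t})$ on each connected component with respect to $\nu$. For \textbf{(a)}, the idea is to exhaust $\mathcal{H}_1(\mu)$ by thick parts, where no saddle connection is shorter than $\varepsilon$, and to bound the $\nu$-measure of the complementary thin parts by counting estimates for short saddle connections on a typical flat surface. For \textbf{(b)}, I would invoke a Hopf-style argument: the horizontal and vertical measured foliations on $(X,\omega)$ furnish stable and unstable foliations for $g_t$, and combining recurrence (guaranteed by \textbf{(a)}) with absolute continuity of the holonomy along these foliations forces ergodicity on each connected component. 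It is precisely at this step that the classification of connected components from the previous section is needed, since ergodicity is asserted per component and there is no hope of proving it globally on $\mathcal{H}_1(\mu)$.

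With these inputs in place, the final step is straightforward. Since $g_t \subseteq SL_2(\mathbb{R})$, ergodicity of $g_t$ upgrades to ergodicity of $SL_2(\mathbb{R})$, and by Birkhoff's theorem the $SL_2(\mathbb{R})$-orbit of $\nu$-a.e.\ $(X,\omega)$ equidistributes, and in particular is dense, in its connected component of $\mathcal{H}_1(\mu)$. Reintroducing the $\mathbb{R}_{>0}$-scaling yields density of the $GL_2^{+}(\mathbb{R})$-orbit closure in the corresponding component of $\mathcal{H}(\mu)$. The decisive obstacle will be \textbf{(a)}: bounding the volume of the cusps of $\mathcal{H}_1(\mu)$ requires quantitative geometric control over how many saddle connections of bounded length a flat surface can carry, and this is the hard technical content of the Masur and Veech papers. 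Once finiteness and the resulting recurrence are secured, the ergodicity step in \textbf{(b)} follows from relatively standard Hopf-type dynamical machinery.
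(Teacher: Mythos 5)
The paper does not prove this theorem at all: it is quoted as a black box, stated as ``a consequence of results in Masur (1982) and Veech (1982)'' with the remark that the techniques are not algebraic, so there is no argument in the text to compare yours against line by line. Your outline is, in shape, exactly the strategy of those references: restrict to the area-one locus, equip it with the Lebesgue-class measure coming from period coordinates (well defined because transition maps between period charts lie in $GL_{2g+n-1}(\mathbb{Z})$ and preserve Lebesgue measure), prove finiteness of that measure, prove ergodicity of the Teichm\"uller geodesic flow by a Hopf-type argument, and deduce density of almost every $SL_2(\mathbb{R})$-orbit, hence of the $GL_2^{+}(\mathbb{R})$-orbit closure, in a component. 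You also correctly identify the finiteness of the Masur--Veech volume (control of the thin part where short saddle connections exist) as the hard quantitative core. So as a reconstruction of the intended proof strategy the proposal is sound, with the obvious caveat that steps (a) and (b) are precisely the content of the two cited papers and are not actually carried out here.

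Two inaccuracies are worth correcting. First, the claim that the ergodicity step ``needs the classification of connected components from the previous section'' is backwards: the Hopf argument shows that ergodic components are open (up to null sets) as well as closed, so ergodicity holds on each connected component of $\mathcal{H}_1(\mu)$ whatever those components are; indeed Masur's and Veech's theorems predate the Kontsevich--Zorich classification by two decades, and the theorem as stated in the notes only asserts that the orbit closure is \emph{some} connected component. Second, invoking ``Birkhoff's theorem'' to get equidistribution of $SL_2(\mathbb{R})$-orbits is too quick: Birkhoff applies to the one-parameter flow $g_t$, and genuine equidistribution of $SL_2(\mathbb{R})$-orbits is a later and harder statement (Eskin--Masur, Nevo). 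For the theorem at hand you need only density, which follows from ergodicity together with the fact that $\nu$ has full support: for each element of a countable basis of open sets, the set of points whose orbit avoids it is invariant of measure zero, and the countable union of these exceptional sets is still null. With that repair the deduction of the stated theorem from (a) and (b) is complete, in the measure-theoretic reading of ``generic'' that the theorem intends.
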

However, for special $(X,\omega)$ the orbit closure can be a proper subset of $\mathcal{H}(\mu)$. The classification of such proper subsets is a central theme in Teichm\"uller Dynamics. A fundamental result in answering this question was achieved recently in \cite{abem18} and \cite{abemm15}. In understanding such orbit closures, the period coordinates we used to motivate the dimension of $\mathcal{H}(\mu)$ will play an essential role, as we will now see. \\

\begin{thm}(Eskin-Mirzakhani-Mohammadi, 2013) \label{ab:emm}
	Any orbit closure is locally linear in $\mathcal{H}(\mu)$, namely it is locally cut out by real and homogeneous linear equations of period coordinates of $\mathcal{H}(\mu)$.
\end{thm}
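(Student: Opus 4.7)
The plan is to prove the statement as a consequence of a much deeper measure-classification theorem, following the Ratner paradigm adapted to strata. First, I would decompose the $GL_2^+(\mathbb{R})$ action using the scaling factor together with the $SL_2(\mathbb{R})$ action, and single out the Teichm\"uller (diagonal) flow $g_t=\mathrm{diag}(e^t,e^{-t})$, the unipotent horocycle flow $u_s$, and the upper triangular subgroup $P=AU\subseteq SL_2(\mathbb{R})$. The initial move is to pass from an orbit closure to an invariant measure: given $(X,\omega)$, one builds a $P$-invariant probability measure $\nu$ supported on the closure of its $P$-orbit by averaging along long pieces of horocycle orbits, using quantitative non-divergence in the spirit of Minsky--Weiss to guarantee that no mass escapes to infinity in the stratum.

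The heart of the argument is to classify the $P$-invariant ergodic measures on $\mathcal{H}(\mu)$: the claim is that every such $\nu$ is the natural Lebesgue-type measure on an affine invariant submanifold $\mathcal{N}\subseteq\mathcal{H}(\mu)$, where $\mathcal{N}$ is locally cut out by real homogeneous linear equations in period coordinates. The strategy is an exponential-drift argument in the spirit of Ratner and Benoist--Quint: starting from two nearby $\nu$-generic points, flow them by $g_t$ until they separate at a definite scale transverse to the $P$-direction, extract a limiting displacement, and show that it is an extra invariance direction of $\nu$. The essential tool here is a careful analysis of the Kontsevich--Zorich cocycle on the Hodge bundle along the Teichm\"uller flow, which controls how period coordinates spread apart and guarantees that the limiting displacement lies in a linear subspace of a period chart.

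Third, I would promote the measure classification to an orbit-closure classification. Because $\mathrm{supp}(\nu)$ is the affine submanifold $\mathcal{N}$ provided by the classification and $(X,\omega)\in\mathrm{supp}(\nu)$, we get that the $P$-orbit closure of $(X,\omega)$ is contained in $\mathcal{N}$. Equidistribution of long $P$-orbits toward $\nu$, together with the fact that $\mathcal{N}$ itself is a $P$-orbit closure of any of its generic points, yields the reverse inclusion, so the $P$-orbit closure is exactly $\mathcal{N}$. A separate argument, exploiting that an affine submanifold which is $P$-invariant is automatically $SL_2(\mathbb{R})$-invariant (and then $GL_2^+(\mathbb{R})$-invariant via the scaling factor), upgrades the conclusion to the full group action.

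The main obstacle is unambiguously the measure classification. Ratner's theorem in homogeneous dynamics exploits polynomial divergence of unipotent flows on Lie groups, but $\mathcal{H}(\mu)$ is not a homogeneous space and no such polynomial structure is available. The exponential drift must instead be run against the Kontsevich--Zorich cocycle, and the most delicate point is to show that the extra invariance direction produced by the drift is genuinely affine (that is, constant in period coordinates) rather than some curved direction. By comparison, the reduction from $GL_2^+(\mathbb{R})$ to $P$, the construction of the averaged measure $\nu$, and the passage from measure classification to orbit-closure classification are comparatively soft steps once the classification theorem is in hand.
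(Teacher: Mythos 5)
The paper does not prove this theorem at all: it is quoted as a black box from the work of Eskin--Mirzakhani and Eskin--Mirzakhani--Mohammadi (\cite{abem18}, \cite{abemm15}), which is the appropriate thing to do in expository lecture notes, since the actual proof occupies several hundred pages. So there is no ``paper proof'' to compare yours against; the only fair comparison is with the original articles, and judged against those your text is an accurate summary of their architecture rather than a proof. Every load-bearing step is invoked, not established: the quantitative non-divergence needed to build the $P$-invariant measure $\nu$, the entire classification of $P$-invariant ergodic measures as affine measures (this \emph{is} the Eskin--Mirzakhani paper, including the exponential drift run against the Kontsevich--Zorich cocycle, the semisimplicity and zero-exceptional-direction inputs from Forni's and Filip's work on the cocycle, and the delicate step you correctly flag of showing the extra invariance is genuinely affine), and the upgrade from measures to orbit closures.

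On that last point your assessment that the passage from measure classification to orbit-closure classification is ``comparatively soft'' understates what happens: it is the content of the separate Eskin--Mirzakhani--Mohammadi paper, and it is not a formal deduction. The base point $(X,\omega)$ need not be generic for the averaged measure $\nu$, and to show the $P$-orbit closure is itself an affine invariant submanifold (rather than merely contained in the support of some affine measure) one needs a uniform equidistribution statement with control on how long orbits can linger near smaller affine submanifolds; this is achieved via Margulis-function and avoidance arguments in the spirit of Dani--Margulis linearization, which have no analogue in your outline. So the verdict is: your proposal correctly identifies the known strategy and its genuine difficulties, but as a proof it has the same status as the paper's own treatment --- a pointer to \cite{abem18} and \cite{abemm15} --- and no shorter self-contained argument is currently known.
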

\par 
This theorem implies that orbit closures are in fact analytic submanifolds of $\mathcal{H}(\mu)$. We would be interested to say something even more: that these submanifolds also have an algebraic structure, and hence replacing "analytic submanifold" by "algebraic subvariety". This was achieved shortly after in a companion theorem, appearing in \cite{abfil16}. \\

\begin{thm}(Filip) 
	Any orbit closure is locally cut out by algebraic and homogeneous linear equations of period coordinates of $\mathcal{H}(\mu)$. In particular, the orbit closures in $\mathcal{H}(\mu)$ are algebraic subvarieties defined over number fields.
\end{thm}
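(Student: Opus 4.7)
The plan is to take the conclusion of Theorem~\ref{ab:emm} as a starting point and upgrade it: one already knows each orbit closure $\mathcal{M}\subseteq \mathcal{H}(\mu)$ is locally defined by real, homogeneous linear equations in the period coordinates $\bigl(\int_{\gamma_i}\omega\bigr)$. What must be proved is that these equations can be chosen with coefficients in a number field. The natural route, pursued by Filip, is to interpret the tangent bundle $T\mathcal{M}$ as a sub-local-system of the relative cohomology local system $\mathbb{H}:=R^1\pi_*\mathbb{C}$ on the universal family over $\mathcal{H}(\mu)$ (with coefficients extended to include the relative part coming from the zeros $\Sigma$), and then to show this sub-local-system is forced to be defined over $\overline{\mathbb{Q}}$.

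First I would formalize the linear structure: period coordinates identify $T_{(X,\omega)}\mathcal{H}(\mu)$ with $H^1(X,\Sigma;\mathbb{C})$, so the tangent distribution $T\mathcal{M}$ defines, after the EMM theorem, a flat subbundle $V \subseteq \mathbb{H}$ which is monodromy-invariant and defined over $\mathbb{R}$. The next step is to observe that the $GL_2^+(\mathbb{R})$-action preserves the Hodge filtration in a controlled way: the absolute part of $\mathbb{H}$ carries a polarized variation of Hodge structures (the usual one on $H^1(X,\mathbb{Z})$), and the $SL_2(\mathbb{R})$-orbit structure gives $V$ the structure of a direct summand compatible with this VHS. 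Here one invokes Deligne's semisimplicity theorem for polarized VHS: a flat, monodromy-invariant subbundle of a polarized VHS is itself a sub-VHS and splits off as an orthogonal direct summand.

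From there the plan is to show that the projection of $V$ to the absolute cohomology is defined over $\overline{\mathbb{Q}}$ using an integrality/arithmeticity argument. The key ingredient is that the monodromy representation acts on $H^1(X,\mathbb{Z})$, hence preserves an integral lattice; combined with the Hodge-theoretic splitting this forces the idempotent projector onto $V$ to have entries in a number field (a Noether--Deligne style conclusion, or equivalently Chai's criterion applied to the Hodge-theoretic decomposition). The relative part---contributions from cohomology classes dual to cycles joining the $p_i$---must then be handled separately: one studies how the zero locus varies and shows that the additional linear equations, which encode relative periods, are again cut out by $\overline{\mathbb{Q}}$-linear combinations, essentially because they are determined by algebraic conditions on the divisor $(\omega)_0$ inside the Jacobian.

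I expect the main obstacle to be precisely this upgrade from real to algebraic: the EMM theorem gives no control over the field of definition, and a priori the $GL_2^+(\mathbb{R})$-invariance only yields $\mathbb{R}$-linearity. The crucial leverage is that $\mathcal{M}$ is not merely $\mathbb{R}$-linear but $GL_2^+(\mathbb{R})$-invariant, which is a much stronger condition, and that the ambient variation is polarized with integral structure. Making these ingredients interact---via Deligne's semisimplicity, polarizations, and the integrality of the Gauss--Manin connection---is where the technical work lies. Once the defining equations are shown to have algebraic coefficients, the consequence that $\mathcal{M}$ is cut out (locally) by algebraic linear equations, and therefore is an algebraic subvariety defined over a number field, follows essentially by GAGA together with the linearity in period coordinates.
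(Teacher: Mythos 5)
The paper gives no proof of this statement: it is a deep theorem quoted from Filip's work \cite{abfil16} for expository purposes, so there is no internal argument to compare yours against. Judged on its own terms, your sketch does capture the overall shape of Filip's strategy (start from Theorem \ref{ab:emm}, regard the tangent space of the orbit closure as a flat subbundle of relative cohomology, and use Hodge theory together with the integral structure to control the field of definition), but two of your key steps have genuine gaps.

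First, you invoke Deligne's semisimplicity theorem for polarized variations of Hodge structure. That theorem, and the theorem of the fixed part, are proved for families over algebraic (or at least quasi-projective) bases; at this point in the argument an orbit closure is only known to be an analytic submanifold, so the classical results do not apply off the shelf. A central technical achievement of Filip's proof is precisely to establish semisimplicity and fixed-part statements in this non-algebraic setting, using positivity/curvature of the Hodge metric together with dynamical input from the $SL_2(\mathbb{R})$-action on the Kontsevich--Zorich cocycle; your sketch assumes what must in fact be reproved. Second, the concluding step cannot be a GAGA argument: $\mathcal{H}(\mu)$ is not projective, and knowing that a submanifold is locally cut out by linear equations with $\overline{\mathbb{Q}}$-coefficients in period coordinates (which are transcendental local charts) does not by itself make it a global algebraic subvariety. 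Filip deduces algebraicity by characterizing orbit closures as loci where certain cohomology classes remain of Hodge type --- extra endomorphisms (real multiplication) on a factor of the Jacobian and torsion conditions involving the zero divisor $(\omega)_0$ --- and then appealing to the Cattani--Deligne--Kaplan theorem on the algebraicity of Hodge loci. Your remark about ``algebraic conditions on $(\omega)_0$ inside the Jacobian'' gestures toward this, but the decisive Cattani--Deligne--Kaplan input, which is what converts the Hodge-theoretic characterization into algebraicity over a number field, is missing from your outline.
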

\subsection{Teichm\"uller curves} \par 
We will start with the definition and then provide some information on the importance of Teichm\"uller curves. \par
\begin{defn} \par 
	Consider the projection morphism 
	\[\mathcal{H}(\mu) \rightarrow \mathcal{M}_g. \] \par 
	Take an element $(X,\omega)$ and consider the action of $GL_2^{+}(\mathbb{R})$ on it. One observation we should make is that, by acting with an element of $SO(2)$ on a polygonal representation, we are not changing the underlying Riemann surface $X$. It follows that the projection of the $GL_2^{+}(\mathbb{R})$ orbit of $(X,\omega)$ factors through the upper half plane $\mathbb{H}$. \par 
	The induced map (or its image) $\mathbb{H} \rightarrow \mathcal{M}_g $ is called a Teichm\"uller disk. If the image in $\mathcal{M}_g$ is a complex algebraic curve, then it is called a Teichm\"uller curve.  \\
\end{defn}
 
An immediate consequence of this definition is that, for a Teichm\"uller curve,  the associated orbit closure in $\mathcal{H}(\mu)$ has minimal possible dimension. In fact, these curves satisfy many other fascinating properties which we will now mention, to provide the reader with a glimpse of their importance. \begin{bulist}
\item Under a certain metric, Teichm\"uller curves are local isometries from a curve to $\mathcal{M}_g$.(\cite{absw04},\cite{abvee95}) 
\item The union of all Teichm\"uller curves is a dense subset of $\mathcal{M}_g$ (\cite{abeo01},\cite{abche11}) 
\item Teichm\"uller curves are rigid. \cite{abmcm09} 
\item Every curve over a number field is birational to a Teichm\"uller curve. \cite{abem12}  
\item Teichm\"uller curves are not complete curves in $\mathcal{M}_g$ and their closures in the Deligne-Mumford compactification $\overline{\mathcal{M}}_g$ do not intersect any boundary divisors except for $\Delta_0$. \end{bulist}
All these results, and some more are mentioned in \cite{abche17}. Apparently from these results we can see the importance of Teichm\"uller curves. \\

Let us proceed and provide an example of the instance where Teichm\"uller curves appear. 
\subsection{Branched cover construction for (special) Teichm\"uller curves}
\ \par
Consider a holomorphic degree $d$ branched cover
\[ \pi\colon X \rightarrow E\] 
from a genus $g$ Riemann surface $X$ to the square torus $E$, satisfying the following properties: \begin{bulist} 
\item $\pi$ has a unique branch point at a point $q \in E$; 
\item $\pi$ has ramification points $p_1,...,p_n$ over $q$; 
\item each $p_i$ has ramification order $m_i$. \end{bulist} 
\begin{remark}
	From this definition we can obtain a holomorphic one-form on $X$ as follows:
	
	\[ \omega = \pi^{*}dz. \] 
	Its zeroes and multiplicities are easy to understand using the properties we just described. At $p_i$ the map $\pi$ is given by: 
	\[ u \longmapsto z = u^{m_i+1}. \]
	Hence we have
	\[ \pi^{*}dz = d(u^{m_i+1}) \sim u^{m_i}du. \]
	It follows that $\omega$ has a zero of order $m_i$ at $p_i$ and hence $(X,\omega) \in \mathcal{H}(\mu)$ for $\mu=(m_1,...,m_n)$.
\end{remark}
\begin{defn}
	An element $(X,\omega)$ satisfying the properties outlined above is called a square-tiled surface. \\
\end{defn} 
\begin{ex}
	Take the case of genus $g = 2$ and the degree of the map $\pi$ is $5$. As before, $E$ is the square torus. We will provide, geometrically an example of a square-tiled surface. 
	\begin{figure}[H] \centering
	\begin{tikzpicture}
	\begin{scope}
	\draw (0,0) -- (0,3);
	\draw[dashed] (0,1.5) -- (1.5,1.5);
	\draw[dashed] (1.5,0) -- (1.5,1.5);
	\draw[dashed] (3,0) -- (3,1.5);
	\draw[dashed] (4.5,0) -- (4.5,1.5);
	\draw (0,0) -- (6,0);
	\draw (0,3) -- (1.5,3);
	\draw (1.5,3) -- (1.5,1.5);
	\draw (1.5,1.5) -- (6,1.5);
	\draw (6,0) -- (6,1.5);
	\draw (11,0) -- (11,1.5);
	\draw (11,0) -- (12.5,0);
	\draw (12.5,1.5) -- (12.5,0);
	\draw (11,1.5) -- (12.5,1.5);
	\draw [decoration={markings,mark=at position 1 with
		{\arrow[scale=3,>=stealth]{>}}},postaction={decorate}] (7.5,0.75) -- (9.5,0.75);
	\node[mark size=2pt,color=black] at (0,1.5) {\pgfuseplotmark{*}};
	\node[mark size=2pt,color=black] at (1.5,0) {\pgfuseplotmark{*}};
	\node[mark size=2pt,color=black] at (0,0) {\pgfuseplotmark{*}};
	\node[mark size=2pt,color=black] at (0,3) {\pgfuseplotmark{*}};
	\node[mark size=2pt,color=black] at (6,0) {\pgfuseplotmark{*}};
	\node[mark size=2pt,color=black] at (6,1.5) {\pgfuseplotmark{*}};
	\node[mark size=2pt,color=black] at (1.5,1.5) {\pgfuseplotmark{*}};
	\node[mark size=2pt,color=black] at (1.5,3) {\pgfuseplotmark{*}};
	\node[below = 1mm of {(0.75,0)}] {$v_1$};
	\node[above = 1mm of {(0.75,3)}] {$v_1$};
	\node[right = 1mm of {(1.5,2.25)}] {$v_2$};
	\node[left = 1mm of {(0,2.25)}] {$v_2$};
	\node[below = 1mm of {(3.75,0)}] {$v_3$};
	\node[above = 1mm of {(3.75,1.5)}] {$v_3$};
	\node[right = 1mm of {(6,0.75)}] {$v_4$};
	\node[left = 1mm of {(0,0.75)}] {$v_4$};
	\node[left = 1mm of {(11,0.75)}] {$a$};
	\node[right = 1mm of {(12.5,0.75)}] {$a$};
	\node[above = 1mm of {(11.75,1.5)}] {$b$};
	\node[below = 1mm of {(11.75,0)}] {$b$};
	\end{scope}
	\end{tikzpicture}  \caption{Example of square-tiled surface} 
	\end{figure}
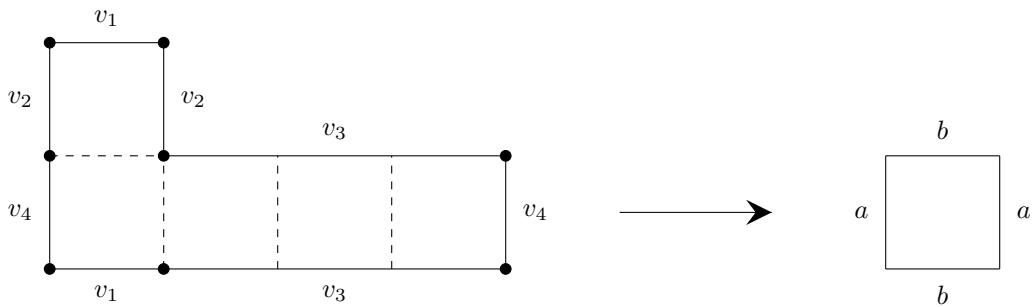
	Here we used the bullet points to make the separation between different vectors $v_i$ and $v_j$ clear. \\
	
	We easily see that the only ramification point is $q$  (corresponding to the vertices of the square) and we can check that $\pi^{*}dz$ has a double zero at $p$ (corresponding to the vertices of the polygon), i.e. angle $6\pi$ at $p$. \\
\end{ex}
This polygonal representation of $X$ is very useful in understanding its orbit. In our case, the group $GL_2^{+}(\mathbb{R})$ acts on $X$ just by changing the squares by parallelograms. 
\begin{prop}
	The orbit of such square-tiled surfaces modulo the scaling factor $\mathbb{C}^{*}$ is the complex one-dimensional Hurwitz space of degree $d$, genus $g$ covers of elliptic curves with a unique branch point of ramification type $\mu$. \\
\end{prop}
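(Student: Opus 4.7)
The plan is to construct an explicit map $\Phi$ from the $GL_2^{+}(\mathbb{R})$-orbit of $(X,\omega)$ onto the prescribed Hurwitz space, to show that $\Phi$ descends to the quotient by the scaling subgroup $\mathbb{C}^{*}$, and to verify that the induced map is a bijection. As a sanity check, both sides should be complex one-dimensional: the orbit is complex two-dimensional since the stabilizer of $(X,\omega)$ in $GL_2^{+}(\mathbb{R})$ is the discrete Veech group, while the Hurwitz space fibers over $\mathcal{M}_{1,1}$ with finite fibers determined by the monodromy data of the cover.

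For the construction, I take $A \in GL_2^{+}(\mathbb{R})$ and consider $A \cdot (X,\omega) = (X_A, \omega_A)$. Since $A$ sends each unit square of $E$ to the parallelogram $A \cdot [0,1]^{2}$ while respecting all identifications, the original cover $\pi \colon X \to E$ transports to a branched cover $\pi_A \colon X_A \to E_A$, where $E_A = \mathbb{C}/A(\mathbb{Z}^{2})$, of the same degree $d$, same ramification profile $\mu$, and with a unique branch point at the origin of $E_A$. Setting $\Phi(A) = [\pi_A \colon X_A \to E_A]$, the map $\Phi$ descends to the quotient by $\mathbb{C}^{*}$ because the subgroup $\mathbb{C}^{*} \hookrightarrow GL_2^{+}(\mathbb{R})$ acts on the plane by $z \mapsto \lambda z$, and multiplication by $\lambda$ gives an isomorphism of elliptic curves $E_A \to E_{\lambda A}$ preserving the origin that lifts to an isomorphism of the two covers.

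For surjectivity, any cover $[\pi' \colon X' \to E']$ in the target Hurwitz space is realized by writing $E' = \mathbb{C}/\Lambda$, choosing an oriented $\mathbb{Z}$-basis $(v_1, v_2)$ of $\Lambda$, and letting $A$ be the matrix with columns $v_1, v_2$; then $\Phi(A) = [\pi']$, using that every element of the orbit inherits the same monodromy representation as the original square-tiled surface. For injectivity, an equality $\Phi(A) = \Phi(B)$ gives an isomorphism of covers whose restriction to the base is an isomorphism $E_A \to E_B$ of elliptic curves fixing the origin, necessarily multiplication by some $\lambda \in \mathbb{C}^{*}$, forcing $A$ and $B$ to represent the same point of the orbit modulo $\mathbb{C}^{*}$. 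The main technical obstacle I anticipate is matching the discrete quotient data on the two sides, namely the Veech group of $(X,\omega)$ on the orbit side and the automorphism group of the cover on the Hurwitz side; doing this cleanly amounts to realizing both spaces as quotients of the upper half plane $\mathbb{H}$ by commensurable finite-index subgroups of $SL_2(\mathbb{Z})$, and deciding whether the statement refers to the coarse Hurwitz space or to its rigidification by a choice of combinatorial data.
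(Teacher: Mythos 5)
The paper itself states this proposition without proof --- the only justification offered is the preceding remark that $GL_2^{+}(\mathbb{R})$ acts by turning the squares into parallelograms --- so your proposal must stand on its own. Your construction $\Phi(A)=[\pi_A\colon X_A\to E_A]$ with $E_A=\mathbb{C}/A(\mathbb{Z}^2)$, the descent under $\mathbb{C}^{*}\cong\mathbb{R}^{+}SO(2)$, and the injectivity step are sound: an isomorphism of covers lies over an isomorphism $w\mapsto \lambda w+c$ of the base tori, which pulls $dz$ back to $\lambda\,dz$, hence $f^{*}\omega_B=\lambda\,\omega_A$ and $B\cdot(X,\omega)$ agrees with $A\cdot(X,\omega)$ modulo $\mathbb{C}^{*}$. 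This is exactly the intended fleshing-out of the paper's one-line heuristic.

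The genuine gap is surjectivity. Given an arbitrary cover $[\pi'\colon X'\to E']$ in the Hurwitz space and a matrix $A$ whose columns form a basis of the lattice of $E'$, the point $\Phi(A)$ is \emph{some} degree-$d$, profile-$\mu$ cover of $E'$, but nothing in your argument identifies it with $\pi'$; the clause ``every element of the orbit inherits the same monodromy representation as the original'' proves the opposite inclusion, namely that the image of $\Phi$ is confined to covers whose monodromy lies in the orbit of that of $\pi$ under the mapping class group of the once-marked torus (essentially $SL_2(\mathbb{Z})$). This restriction is real: the Hurwitz space of such covers is in general disconnected, its components corresponding precisely to these monodromy orbits --- for instance in $\mathcal{H}(2)$ with prime $d\geq 5$ there are exactly two $GL_2^{+}(\mathbb{R})$-orbits of $d$-square surfaces (Hubert--Leli\`evre, McMullen), hence two components --- so the orbit cannot equal the whole Hurwitz space. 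The statement should be read, and your map actually realizes, a \emph{connected component} of the Hurwitz space; even surjectivity onto that component does not follow from one choice of $A$, but requires using the freedom of changing the basis of the lattice (the $SL_2(\mathbb{Z})$-action) together with the classification of one-branch-point covers of a fixed torus by their monodromy. The same discrete bookkeeping settles the well-definedness you defer at the end: for the induced map on the orbit to exist, $\Phi$ must be constant on cosets of the Veech group, which for a reduced square-tiled surface lies in $SL_2(\mathbb{Z})$ and so preserves $\mathbb{Z}^{2}$; this is precisely the ``matching of discrete quotient data'' you flag but do not carry out.
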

\begin{remark}
	Such one-dimensional Hurwitz spaces give infinitely many Teichm\"uller curves in $\mathcal{H}(\mu)$, as $d$ can be arbitrarily large.
\end{remark} 
The equations in the period coordinates for the orbit in $\mathcal{H}(2)$ are the following: 
\[v_4-v_2=0, \] 
\[3v_1-v_3 = 0. \] \par
Thus the validity of Theorem \ref{ab:emm} is checked in this particular case. \\

We will finish these notes by mentioning some possible generalizations of the concepts presented here. Of course, it is natural to consider also the strata of meromorphic differentials and higher order differentials. We would be interested in understanding their connected components, cycle classes in moduli spaces and compactification. The question of compactification is addressed in \cite{abbcg18, abbcg19} while their cycle classes were studied in \cite{absau19}. Moreover, F. Janda, R. Pandharipande, A. Pixton, and D. Zvonkine provide a conjectural description of the cycle classes in the case of differential one-forms in Appendix A of \cite{abfp18}. This conjectural description was generalized to the case of $k$-differentials in \cite{absch18}. \\ 

We will now proceed to present the exercise sheet that accompanies these lectures and also, provide solutions for some of the problems.

\newpage \ \\ \ \\ \ \\ \ \\ 

\begin{center}
	\large{Moduli of Differentials and Teichm\"uller Dynamics}
\end{center}

\begin{center}
	\large{Exercises}
\end{center}
\ \\  \ \\ \ \\
\textbf{Exercise 1.} Describe the zeroes (and poles) of the differentials represented by the following flat surfaces: \par
(1) A decagon of type $a+b+c+d+e = e+b+a+d+c$. \par
(2) A $2n$-gon of type $v_1+...+v_n=v_n+...+v_1$. \par
(3) A big flat torus minus a small flat torus. \par
(4) The Euclidean plane with a point at infinity minus a flat torus. \par
(5) A pillow case. \par 
(6) The surface of a cube. \par
Remark: (4), (5), (6) are generalizations of holomorphic one-forms. \\[2mm]
\textbf{Exercise 2.} Prove that $\mathcal{H}(4)$ has exactly two connected components. \\[2mm]
\textbf{Exercise 3.} Draw a translation surface in $\mathcal{H}(2)$ and compute the Arf invariant. \\[2mm]
\textbf{Exercise 4.} A variety is called unirational if it can be dominated by a projective space. Prove that each of the following strata is unirational. \par 
(1) $\mathcal{H}^{hyp}(2g-2)$. \par 
(2) All strata (components) in genus 3. \\[2mm]
\textbf{Exercise 5.} Prove that for any translation surface in $\mathcal{H}(3,1)$, the underlying Riemann surface is not hyperelliptic. \\[2mm]
\textbf{Exercise 6.} Suppose $(X,\omega) \in \mathcal{H}(2,1,1)$ with $(\omega)_0 = 2p_1+p_2+p_3$. Prove that $h^0(X,p_1+p_2) = 1.$ \\[2mm]
\textbf{Exercise 7.} Prove that the projection of any $GL_2^{+}(\mathbb{R})$-orbit to the moduli space of genus $g$ curves $\mathcal{M}_g$ factors through the upper-half plane $\mathbb{H}.$ \\[2mm]
\textbf{Exercise 8.} Show that $(X,\omega) \in \mathcal{H}(\mu)$ corresponds to a square-tiled surface if and only if all period coordinates of $(X,\omega)$ belong to $\mathbb{Z} \oplus \mathbb{Z}i$, namely, if and only if $(X,\omega)$ is an integral point in $\mathcal{H}(\mu)$ under the period coordinates. \\[2mm]
\textbf{Exercise 9.} Let $\Delta = \cup_{i=0}^{[g/2]}\Delta_i$ be the total boundary of the Deligne-Mumford compactification $\overline{\mathcal{M}}_g$, where a general point in the boundary component $\Delta_i$ parametrizes a nodal union of a genus $i$ curve and a genus $g-i$ curve for $i>0$ and a general point in $\Delta_0$ parametrizes an irreducible nodal curve of geometric genus $g-1$. Prove that for a Teichm\"uller curve in $\mathcal{M}_g$ generated by a square-tiled surface, its closure in $\overline{\mathcal{M}}_g$ does not intersect $\Delta_i$ for any $i>0$. \\[2mm]
\textbf{Exercise 10.} Suppose a family of translation surfaces in $\mathcal{H}(2)$ degenerate to two elliptic curves $E$ and $E'$ union at a node $q$. Moreover suppose the limit of the double zeros is a point $p \in E \setminus q$. Prove that $2p \sim 2q$ in $E$.  \\[2mm]
\textbf{Exercise 11.} Show that in Exercise 1, part (2), the underlying Riemann surface is hyperelliptic. Moreover, find all of its Weierstrass points. \\[2mm]
\textbf{Exercise 12.} Let $X$ be a hyperelliptic Riemann surface defined by the equation
\[ x^2=(z-a_1)...(z-a_{2g+2}) \]
(extending to $\infty$), where $a_1,...,a_{2g+2}$ are fixed distinct points in $\mathbb{C}$ and the double cover $X\rightarrow \mathbb{P}^1$ is given by $(x,z)\rightarrow z$. Prove that the following one-forms are holomorphic (hence they form a basis of the space of holomorphic one-forms on $X$):
\[ \frac{dz}{x}, \frac{zdz}{x},...,\frac{z^{g-1}dz}{x}. \] \\[2mm]
\textbf{Exercise 13.} Draw two translation surfaces in $\mathcal{H}(4)$ such that their Arf invariants are different. 

\section*{Some solutions} 
\textbf{Exercise 1.}(2)
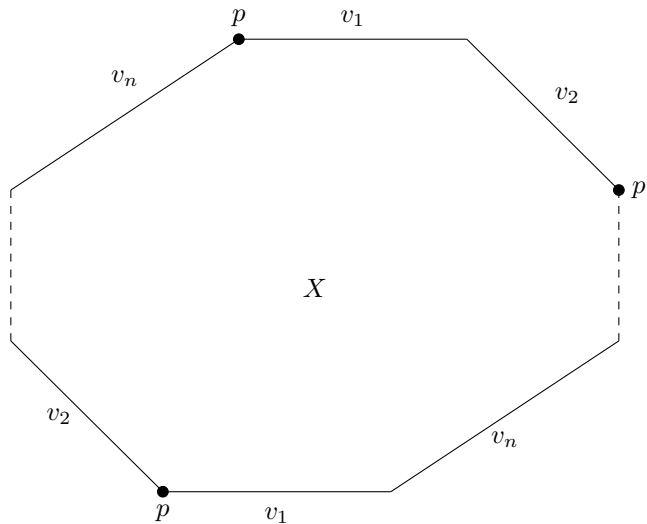
\begin{figure}[H] \centering
\begin{tikzpicture}
\begin{scope}
\draw (0,0) -- (3,2);
\draw (3,2) -- (6,2);
\draw (6,2) -- (8,0);
\draw[dashed] (0,0) -- (0,-2);
\draw[dashed] (8,0) -- (8,-2);
\draw (0,-2) -- (2,-4);
\draw (2,-4) -- (5,-4);
\draw (5,-4) -- (8,-2);

\node[above = 0.5mm of {(3,2)}] {$p$};
\node[right = 0.5mm of {(8,0)}] {$p$};
\node[below = 0.5mm of {(2,-4)}] {$p$};
\node[below = 0.5mm of {(4,-1)}] {$X$};
\node[above = 0.5mm of {(1.5,1.2)}] {$v_n$};
\node[above = 0.5mm of {(4.5,2)}] {$v_1$};
\node[above right = 0.5mm of {(7,1)}] {$v_2$};
\node[below = 0.8mm of {(3.5,-4)}] {$v_1$};
\node[left = 0.8mm of {(1,-3)}] {$v_2$};
\node[below = 0.8mm of {(6.5,-3)}] {$v_n$};

\node[mark size=2pt,color=black] at (3,2) {\pgfuseplotmark{*}};
\node[mark size=2pt,color=black] at (2,-4) {\pgfuseplotmark{*}};
\node[mark size=2pt,color=black] at (8,0) {\pgfuseplotmark{*}};
\end{scope}
\end{tikzpicture} \caption{Translation surfaces defined by the corresponding vectors} 
\end{figure}
We distinguish two cases, depending on the parity of $n$. \begin{numlist} 
\item The number $n$ even. 
\par 
In this case, we can see by chasing the glueings that all vertices identify with each other: \par
If we denote by $p$ the starting point of the vector $v_1$, we see that it is the same as the endpoint of the vector $v_2$ which is the same as the starting point of $v_3$ and so on. Inductively we see that the vertices correspond to each other by simple combinatorics. \par 
It follows that the angle around $p$ is $(2n-2)\pi$. \par 
Let us compute the genus of the curve using the Euler characteristic: \begin{abclist} 
\item one vertex $p$, 
\item $n$ edges $v_1,...,v_n$, 
\item one face $X$. \end{abclist}
\[ \Rightarrow 2-2g = 2-n \Rightarrow g = \frac{n}{2}. \]
The order of multiplicity at $p$ of the corresponding one-form is $n-2$ and we see that the translation surface is in the stratum $\mathcal{H}(n-2)$. \par 
\item The number $n$ is odd. \par 
In this case, we can check that the vertices are identified alternatingly with two points $p$ and $q$. The angle around each of them is $(n-1)\pi$. 
A similar genus computation yields that $g(X) = \frac{n-1}{2}$. \par 
We see that in this case the corresponding translation surface is in $\mathcal{H}(\frac{n-3}{2}, \frac{n-3}{2})$. \end{numlist}
\textbf{Exercise 1.}($3$) 
\begin{figure}[H] \centering
\begin{tikzpicture}
\begin{scope}
\draw (0,0) -- (2,6);
\draw (0,0) -- (8,0);
\draw (8,0) -- (10,6);
\draw (10,6) -- (2,6);
\draw (4,2) -- (4,3);
\draw (4,2) -- (5,2);
\draw (5,3) -- (5,2);
\draw (5,3) -- (4,3);
\node[left = 0.5mm of {(1,3)}] {$a$};
\node[right = 0.5mm of {(9,3)}] {$a$};
\node[below = 0.5mm of {(4,0)}] {$b$};
\node[above = 0.5mm of {(6,6)}] {$b$};
\node[above = 0.5mm of {(4.5,3)}] {$c$};
\node[below = 0.5mm of {(4.5,2)}] {$c$};
\node[left = 0.5mm of {(4,2.5)}] {$d$};
\node[right = 0.5mm of {(5,2.5)}] {$d$};
\end{scope}
\end{tikzpicture} \caption{Translation surface given by a flat torus minus a smaller flat torus} 
\end{figure}
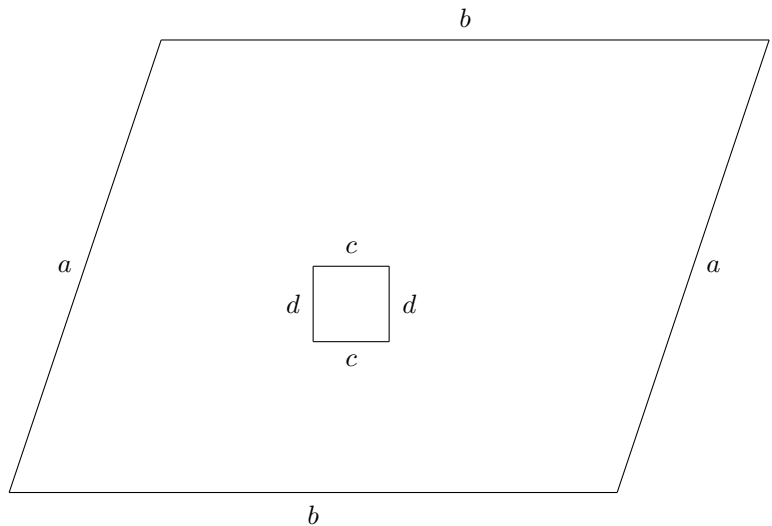 
By identifying the corresponding edges in the above picture we see that the vertices of the big torus all identify with each other and the same is true for the small torus. We denote the corresponding two points by $p$ and $q$. \\

 The angle around these two points are $2\pi$ and $6\pi$ (as we count the exterior angles of the small parallelogram). As no other point except $q$ is special, it follows that the corresponding translation surface lies in $\mathcal{H}(2)$. This also implies that the genus is 2. Indeed, by choosing a triangulation of the surface we can compute using the Euler characteristic that the genus is 2. \\
 
\textbf{Exercise 1.}(4) \par 
The points of the torus get identified with one another and the angle around the point is the sum of the exterior angles, hence it is equal to $6\pi$. On the other hand, we know that when it is extended, the one-form $dz$ has a double pole at infinity, so we see that the translation surface is in $\mathcal{H}(2,-2)$. Using half-lines to triangulate our surface we check by the Euler characteristic that the genus of our surface is indeed 1. \par 
This example is interesting from the following two points of view: \begin{bulist}
\item This can be seen as the limit case of the Riemann surface in part ($3$) by taking the big torus to infinity. However, there seems to be a problem with this analogy: the limit is genus $1$ while we started with genus $2$. In fact, there is a hidden component: the one we obtain by shrinking the small torus to a point instead of expanding the big torus. This hidden component corresponds to an element in $\mathcal{H}(0)$. In fact, the limit curve is the nodal curve obtained by glueing the two genus 1 curves described along the infinity point and $p$ (the point to which the small torus degenerates). We are satisfied as now the genus is preserved going to the limit case.
\item As seen above, meromorphic differentials naturally arise in the boundary of a holomorphic stratum $\mathcal{H}(\mu)$, motivating in part the interest in also studying the meromorphic strata. \end{bulist}
\textbf{Exercise 1.}(5) \par 
Let us first clarify what we mean by a "pillow case" by providing a geometric picture explaining the glueings. 
\begin{figure}[H] \centering
\begin{tikzpicture}
\begin{scope}
\draw[->] (0,0) -- (3,0);
\draw[->] (0,0) -- (0,3);
\draw[->] (6,0) -- (3,0); 
\draw[->] (6,0) -- (6,3);
\draw[->] (0,3) -- (3,3);
\draw[->] (6,3) -- (3,3); 
\node[above = 0.5mm of {(1.5,3)}] {$b$};
\node[above = 0.5mm of {(4.5,3)}] {$b$};
\node[left = 0.5mm of {(0,1.5)}] {$a$};
\node[right = 0.5mm of {(6,1.5)}] {$a$};
\node[below = 0.5mm of {(1.5,0)}] {$c$};
\node[below = 0.5mm of {(4.5,0)}] {$c$};
\node[below = 0.5mm of {(4.5,0)}] {$c$};
\node[above = 0.5mm of {(3,3)}] {$r$};
\node[above left = 0.5mm of {(0,3)}] {$p$};
\node[below left = 0.5mm of {(0,0)}] {$q$};
\node[above right = 0.5mm of {(6,3)}] {$p$};
\node[below right = 0.5mm of {(6,0)}] {$q$};
\node[below = 0.5mm of {(3,0)}] {$s$};
\end{scope}
\end{tikzpicture} \caption{Pillow case} 
\end{figure}
The segments here are glued together respecting the orientation of the vector. In particular, as the glueing are not determined by translations, this geometric picture does not correspond to a one-form. However, they are determined by either translation or rotation of $180$ degree so we have a distinguished quadratic differential on the associated Riemann surface. \\

 We see that the four special points are $p,q,r,s$ and at each of the points we have a pole of the quadratic differential. In particular, the half-translation surface in this case has four simple poles. This would imply that the genus of the Riemann surface is 0. \\
 
Let us check this by using the Euler characteristic: 
\begin{figure}[H]\centering
\begin{tikzpicture}
\begin{scope}
\draw[->] (0,0) -- (3,0);
\draw[->] (0,0) -- (0,3);
\draw[->] (6,0) -- (3,0); 
\draw[->] (6,0) -- (6,3);
\draw[->] (0,3) -- (3,3);
\draw[->] (6,3) -- (3,3); 
\draw[dashed] (0,0) -- (3,3);
\draw[dashed] (3,0) -- (3,3);
\draw[dashed] (3,0) -- (6,3);
\node[above = 0.5mm of {(1.5,3)}] {$b$};
\node[above = 0.5mm of {(4.5,3)}] {$b$};
\node[left = 0.5mm of {(0,1.5)}] {$a$};
\node[right = 0.5mm of {(6,1.5)}] {$a$};
\node[below = 0.5mm of {(1.5,0)}] {$c$};
\node[below = 0.5mm of {(4.5,0)}] {$c$};
\node[below = 0.5mm of {(4.5,0)}] {$c$};
\node[above = 0.5mm of {(3,3)}] {$r$};
\node[above left = 0.5mm of {(0,3)}] {$p$};
\node[below left = 0.5mm of {(0,0)}] {$q$};
\node[above right = 0.5mm of {(6,3)}] {$p$};
\node[below right = 0.5mm of {(6,0)}] {$q$};
\node[below = 0.5mm of {(3,0)}] {$s$};
\end{scope} 
\end{tikzpicture} \caption{Triangulation of the surface}
\end{figure}
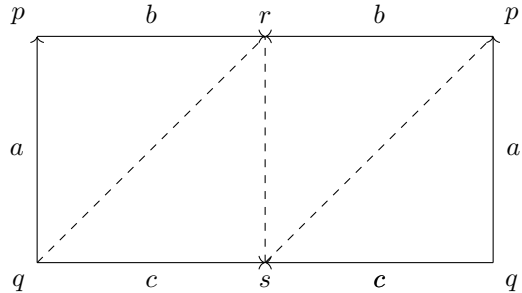
In this triangulation we have: \begin{bulist}
\item 4 vertices: $p,q,r,s$ 
\item 6 edges: $a,b,c$ and the three dashed segments 
\item 4 faces: determined by the four triangles in the picture. \end{bulist}
\par 
It follows that the genus is 0, as expected. \\

\textbf{Exercise 2.}  
We have seen in the lectures that $\mathcal{H}^{hyp}(4)$ is connected. Hence we sketch that there is only one other connected component. The key observation is that if $X$ is a genus 3, nonhyperelliptic curve, it can be embedded in $\mathbb{P}^2$ using the global sections of the canonical bundle. Hence $X$ has a canonical embedding as a plane quartic. Denote also by $X$ the image in $\mathbb{P}^2$. Then each line section of $X$ in $\mathbb{P}^2$ corresponds to the zero divisor of a holomorphic one-form. \\ 
 
For our purpose we want to have a line that intersects $X$ in a point of multiplicity 4. We consider the following incidence variety: 
\[ \Sigma = \left\{(X,L,p) \mid \text{the line $L$ has contact order 4 at $p$ with the quartic $X$}\right\}. \] \par
Consider the flag variety $F$ of pairs $(p,L)$ such that the point $p$ is contained in the line $L$. \\ 

The map $\Sigma \rightarrow F $ has fibers given by linear subspaces of quartics and $F$ is irreducible. It follows that $\Sigma$ is irreducible. In particular, it follows that the image of $\Sigma$ in $\mathcal{M}_{3,1}$ is irreducible. The image consists of points $(X,p)$ such that $X$ is not hyperelliptic and 
\[\mathcal{O}_X(4p) \cong K_X. \]
There is a projection with fibers $\mathbb{C}^{*}$ from the corresponding nonhyperelliptic locus in $\mathcal{H}(4)$ to this irreducible image in $\mathcal{M}_{3,1}$ and hence the conclusion follows. \\ \ \\
\textbf{Exercise 5} Take $(X,\omega)$ in $\mathcal{H}(3,1)$ and assume that $X$ is hyperelliptic. We will argue by contradiction. \par 
It is known that in this case $X$ has a unique $g^1_2$ and that $K_X = 2g^1_2$. Consider now the points $p$ and $q$ on $X$ such that: 
\[ (\omega)_0= 3p+q. \]
By what we just said, it follows that $p$ must be a Weierstrass point and moreover that $p$ and $q$ are conjugate. It then follows that $p = q$, because a Weierstrass point is conjugate to itself, which contradicts to the fact that they are distinct by definition. \\

In conclusion it follows that the underlying Riemann surface of a translation surface in $\mathcal{H}(3,1)$ cannot be hyperelliptic. \\ \ \\
\textbf{Exercise 6.} We will argue by contradiction. \par

Assume there exists $(X,\omega) \in \mathcal{H}(2,1,1)$ such that 
\[ h^0(X,p_1+p_2) = 2. \]
We preserve here the notations in the statement of the exercise. Consider 
\[ s\in H^0(X,p_1+p_2) \]
a nonconstant global section. \\

Then $s$ determines a morphism from $X$ to $\mathbb{P}^1$ with fiber over 0 consisting of the points $p_1$ and $p_2$, both with multiplicity 1. It follows that this morphism is a degree 2 map to $\mathbb{P}^1$ and $X$ is hyperelliptic. By this description it follows that $p_1$ and $p_2$ are conjugate. Using the same approach as in Exercise 5, we also conclude that $p_1$ and $p_3$ are conjugate. 
\[ \Rightarrow p_2=p_3 \]
but they are different by definition. It follows that our assumption is wrong and that 
\[ h^0(X,p_1+p_2) = 1.\]
\\
\textbf{Exercise 7.} We have that 
\[GL_2^{+}(\mathbb{R})/\mathbb{R}^{+}SO(2) \cong \mathbb{H}. \] \par
The observation is that acting by an element of $\mathbb{R}^{+}SO(2)$ on a polygon does not change the underlying Riemann surface. But this is clear as rotations and homotheties preserve the complex structure. The conclusion follows. \\ \ \\
\textbf{Exercise 8.} A variant of this exercise can be find as Lemma 3.1 in \cite{abeo01}. Take an element $(X,\omega) \in \mathcal{H}(\mu)$. Recall first that we have locally on $\mathcal{H}(\mu)$ the period coordinates, given by a basis of the relative homology $H_1(X,p_1,...,p_n;\mathbb{Z})$ where $p_1,...,p_n$ are the zeroes of $\omega$. \\

$"\Rightarrow"$ Let $(X,\omega)$ a square-tiled surface. Then we know that there exists a map 
\[ \pi \colon X \rightarrow E \cong \mathbb{C}/(\mathbb{Z}\oplus \mathbb{Z}i)\ \]
such that it has a unique branch point $q \in E$ and is ramified only over $p_1,...,p_n$ with ramification order given by $\mu$ and $\omega = \pi^{*}dz$. \\

We prove that all period coordinates of $\omega$ are in $\mathbb{Z}\oplus \mathbb{Z}i$. We distinguish two cases: \begin{numlist}
\item $\gamma$ comes from the absolute homology; 
\item $\gamma$ comes from the relative part. \end{numlist} 
Consider the case 1: $\gamma$ a closed curve. We know from the projection formula that: 
\[ \int\limits_\gamma \pi^{*}dz = \int\limits_{\pi_{*}\gamma}dz. \] \par 
Since $\gamma$ is closed, it follows that $\pi_{*}\gamma$ is closed, and
\[\pi_{*}\gamma \in H_1(E;\mathbb{Z}), \]
hence the period of $\omega$ at $\gamma$ is an integral sum of $\alpha$ and $\beta$, the standard basis of $H_1(E;\mathbb{Z})$. 
\[\Rightarrow \int\limits_{\pi_{*}\gamma}dz \in \mathbb{Z}\oplus\mathbb{Z}i. \] \par 
Case 2 can be similarly treated: 
\[ \int\limits_\gamma \pi^{*}dz = \int\limits_{\pi_{*}\gamma}dz. \] \par 
The curve $\gamma$ has endpoints $p_1$ and $p_i$ which both project to $q$ in $E$. In particular, the curve $\pi_{*}\gamma$ becomes closed and the same reasoning as before gives 
\[ \int\limits_{\pi_{*}\gamma}dz \in \mathbb{Z}\oplus\mathbb{Z}i. \] \\

$"\Leftarrow"$ A standard way to obtain local charts on $X$ is to consider integrals: 
\[ x\in X \mapsto \int\limits_\gamma \omega, \] 
where $\gamma$ is a path from a fixed point $p$ to $x$. However, this map is not well-defined, as it depends on the choice of $\gamma$. \\

Since all period coordinates are in $\mathbb{Z}\oplus\mathbb{Z}i$, to fix the issue, we modify the above map as follows: 
\[ x\mapsto \int\limits_\gamma \omega \text{\ (mod }\mathbb{Z}\oplus\mathbb{Z}i). \] \par
We claim that this is independent of the choice of the path $\gamma$ from $p$ to $x$. \par
Take $\gamma_1$ and $\gamma_2$ two such paths. It follows that $\gamma_1 \cdot \gamma_2^{-1}$ is a closed path and can be written as an integral combination of elements in the absolute homology.
\[\Rightarrow \int\limits_{\gamma_1\cdot \gamma_2^{-1}}\omega \in \mathbb{Z}\oplus\mathbb{Z}i\]
\[\Rightarrow \int\limits_{\gamma_1}\omega= \int\limits_{\gamma_2}\omega \text{\ (mod }\mathbb{Z}\oplus\mathbb{Z}i). \] 
Therefore, this induces a map from $X$ to the unique square torus $E$. \\ 

The points $p_1,p_2,...,p_n$ are all mapped to the same point $q$ because of the conditions on the relative homology. \\

The last thing that we will need to see is that the map defined in this way has order of ramification $m_i$ at $p_i$. \\ 

Locally at $p_i$, $\omega$ looks like $z^{m_i}dz$. It then follows that its integral looks like $z^{m_i+1}$ near the point $p_i$ and hence the ramification order is the one expected. \par 
It is easy to see either by looking at the one-form $\omega$ or by the Riemann-Hurwitz Formula, that the map has no other ramification points. \\ \ \\
\textbf{Exercise 12.} We make the observation that locally near the points where $x=0$ we have that $x$ is a local coordinate while $z$ is not. By using the relation between $dx$ and $dz$ that we deduce from the equation, we see that the one-forms in the exercise are holomorphic at the points $(0,a_i)$. We would just need to check that when extended to $\infty$, these one-forms remain holomorphic. \\ 

Denote $p(z) = (z-a_1)(z-a_2)...(z-a_{2g+2})$. We take the following polynomial: 
\[ k(t) = t^{2g+2}p(1/t). \] 
Take the surface given in $\mathbb{C}^2$ by the equation: 
\[y^2 = k(t). \]
We are glueing the two surfaces away from $x=0$ and $y=0$ by: 
\[ t = \frac{1}{z}\text{ \ and \ } y = \frac{x}{z^{g+1}}. \]
By these glueings, we obtain a compactification of the Riemann surface described by the first equation and moreover, by writing the one-forms in terms of $y$ and $t$, we see that they are holomorphic and hence the conclusion follows.

\bibliographystyle{alpha}
\bibliography{abdc}


\end{document}